\numberwithin{equation}{section} \theoremstyle{plain}
\newtheorem{theorem}{Theorem}[section]
\newtheorem{lemma}{Lemma}[section]
\newtheorem{example}{Example}[section]
\newtheorem{definition}{Definition}[section]
\newtheorem{remark}{Remark}[section]
\begin{document}

\newcommand{\gai}[1]{{#1}}

%% Harvard
%\bibliographystyle{model2-names.bst}\biboptions{authoryear}

\makeatletter
\def\ps@pprintTitle{%
  \let\@oddhead\@empty
  \let\@evenhead\@empty
  \let\@oddfoot\@empty
  \let\@evenfoot\@oddfoot
}
\makeatother

\newcommand\tabfig[1]{\vskip5mm \centerline{\textsc{Insert #1 around here}}  \vskip5mm}

% title page
\vskip2cm

\title{Sublinear expectation structure under countable state space}
\author{Shuzhen Yang\thanks{Shandong University-Zhong Tai Securities Institute for Financial Studies, Shandong University, PR China, (yangsz@sdu.edu.cn). This work was supported by the National Key R\&D program of China (Grant No.2018YFA0703900,\ ZR2019ZD41), National Natural Science Foundation of China (Grant No.11701330), and Taishan Scholar Talent Project Youth Project.}
\quad Wenqing Zhang\thanks{Corresponding author, School of Mathematics, Shandong University, PR China, (zhangwendy@mail.sdu.edu.cn).}
}
\date{}
\maketitle

\begin{abstract}
In this study, we propose the sublinear expectation structure under countable state space. To describe an interesting "nonlinear randomized" trial,  based on a convex compact domain, we introduce a family of probability measures under countable state space. Corresponding the sublinear expectation operator introduced by S. Peng, we consider the related notation under countable state space. Within the countable state framework, the sublinear expectation can be explicitly calculated by a novel repeated summation formula, and some interesting examples are given. Furthermore, we establish Monotone convergence theorem, Fatou's lemma and Dominated convergence theorem of sublinear expectation. Afterwards, we consider the independence under each probability measure, upon which we establish the sublinear law of large numbers and obtain the maximal distribution under sublinear expectation.
\end{abstract}

\noindent KEYWORDS: Sublinear expectation; Countable state space; Repeated summation formula; Convergence theorems; Law of large numbers

\section{Introduction}
\label{sec:introduce}

In financial market, the price data is observed at discrete times only \cite{Jean12}. Meanwhile continuous-time process are only approximations to physically realizable phenomena \cite{Bertsimas00}. When the underlying sample path are continuous, the discretely sampled data will always appear as a sequence of discrete jumps \cite{Ait02} indicating that discrete models cannot be derived directly from the discretization of continuous models.
Thus it is crucial to study the financial models under countable time and state. Cox et al. \cite{Cox79} developed the binary tree model and used risk-neutral probabilities to price financial derivatives. Subsequently, there has been a substantial amount of research conducted on financial models using discrete time \cite{Duan95, Heston00, Tao22, Diego23}. For continuous mathematical models of derivative pricing theory, see monograph \cite{Kwok08}.

Mean and volatility uncertainties are two important uncertainty properties in financial market.
To describe the model uncertainty, Peng \cite{Peng1997} first constructed a nonlinear expectation which provides a novel mathematical structure. Furthermore, Peng \cite{Peng2004, Peng2006, Peng2008, Peng2019} originally proposed sublinear expectation space which deduced nonlinear law of large numbers and central limit theorem. Then, sublinear expectation has been widely used in finance \cite{EJ13, EJ14, Peng2022, Peng2023}. Fan \cite{Fan08} considered the Jensen’s inequality for filtration consistent nonlinear expectation without domination condition.

There are many related research focusing on the countable time and state under sublinear expectation. Cohen and Elliott \cite{Cohen11} considered backward stochastic difference equations under discrete time with infinitely state. Belak et al. \cite{Belak18} provided existence, uniqueness, and stability results and established convergence of the associated discrete-time nonlinear aggregations. Grigorova and Li \cite{Grigorova23} studied the stochastic representation problem in discrete time under nonlinear expectation and applied it to the pricing of American options. At the moment, the majority of literatures are rooted in the application of sublinear expectation theory to discrete mathematical models, with limited research commencing with the construction of discrete sublinear expectation structure.

In this paper, we investigate the sublinear expectation structure under countable state space.
We first consider a nonlinear randomized trial, based on which a countable state sample space $\Omega=\{\omega_{i}\}_{i\in \mathbb{Z}^{+}}$ and a family of probability measures $\mathcal{P}_{\Theta} = \{P_{\theta}: {\theta}\in \Theta \}$ are introduced.
In countable state space, a family of probability measures can be characterized by a convex compact domain $\mathcal{D}$, which is denoted by
$$
{\mathcal{D}}=\{(\theta_{i})_{i\in \mathbb{Z}^{+}}:  \underline{f}_{i}(\theta_{1},\cdots,\theta_{i-1})\leq \theta_{i}\leq \overline{f}_{i} (\theta_{1},\cdots,\theta_{i-1}),\ \sum_{i\in Z^+}\theta_i=1\},
$$
where $\{f_{i}\}_{i\in \mathbb{Z}^{+}}$ are continuous functions, the lower bounds $\underline{f}_i$ are convex and the upper bounds $\overline{f}_i$ are concave, satisfying $\overline{f}_i - \underline{f}_i \leq c_i$ with $\{c_i\}_{i\in\mathbb{Z}^+}$ being a positive sequence such that $\sum_{i=1}^\infty c_i < \infty$.
Building on $\mathcal{D}$, the sublinear expectation under countable state space is formulated as
$$
\sup_{\theta \in \mathcal{D}} E_{\theta}[X]:=\sup_{\theta \in \mathcal{D}}\sum_{i\in \mathbb{Z}^{+}} X(\omega_i) P_\theta(\{\omega_i\}).
$$
Obviously, this equation satisfies the properties of the sublinear expectation operator established by Peng \cite{Peng2019}. Indeed, sublinear expectation can be explicitly calculated by a repeated summation formula under countable state space,
$$
\mathbb{E}[X]=\sup_{\theta_1\in I_1} \cdots \sup_{\theta_{i}\in I_{i}}\cdots \left [\sum_{i\in \mathbb{Z}^{+}}X(\omega_i)\theta_i \right],
$$
where $I_{i}=\left [\underline{f}_{i}(\theta_{1}, \cdots, \theta_{i-1}),\ \overline{f}_{i} (\theta_{1},\cdots,\theta_{i-1}) \right],\ i\in \mathbb{Z}^{+}$,
and some related examples are given to verify it.
Furthermore, we present Monotone convergence theorem, Fatou's lemma and Dominated convergence theorem under relatively compact probability set $\mathcal{P}_{\Theta}$.
Afterwards, we proposed a new independence definition of sublinear expectation under each $P_{\theta}$ satisfying
$$
\mathbb{E} [\varphi(X,Y)]= \sup_{\theta\in\mathcal{D}} E_{\theta} \left[E_{\theta} \left [\varphi (x, Y) \right]_{x=X}\right],\quad \forall \varphi\in C_{b.lip}(\mathbb{R}).
$$
Upon this independence and Dominated convergence theorem, we give a new proof for the sublinear law of large numbers under countable state space, wherein the sequence converges to a maximal distribution.

The main contributions of this paper are twofold:

(i). We provide the calculation method of the sublinear expectation under countable state space.
Based on a nonlinear randomized trial, we introduce a countable state space and a family of probability measures. By utilizing a convex compact domain $\mathcal{D}$ to describe a family of probability measures $\mathcal{P}_{\Theta}$, the sublinear expectation can be calculated explicitly.

(ii). We derive some convergence theorems of sublinear expectation, and then deduce the law of large numbers based on the independence under each probability. Within the countable state space, the Monotone convergence theorem, Fatou's lemma and Dominated convergence theorem are established for a relatively compact set of probability measures. Building on these results, we present a novel proof of the law of large numbers under sublinear expectation.

The remainder of this paper is organized as follows. Section \ref{sec:structure} considers the sublinear expectation structure under countable state space and develops a calculation method by a repeated summation formula. Following that, we establish some convergence theorems in a relatively compact probability set $\mathcal{P}_{\Theta}$, and consider the independence under each $P_{\theta}$, from which the law of large numbers is derived in Section \ref{sec:control}. Finally, Section \ref{sec:conclude} concludes this paper and proposes the further study.

\section{Sublinear expectation structure}
\label{sec:structure}

Well-known that the classical randomized trial satisfies the following three properties: (i) We can repeat the trial under the same conditions; (ii) We can obtain all the results of the trial; (iii) We don't know the result of the trial before completing the trial. Since Knight \cite{Knight1921} distinguished the risk (random) and uncertainty in the book "Risk, Uncertainty and Profit", we realize that the classical randomized trial cannot describe the uncertainty in the model. Therefore, in this study, we first introduce a nonlinear randomized trial satisfying the following properties: (i') We cannot repeat the trial under the same conditions; (ii') We can obtain all the results of the trial; (iii') We don't know the result of the trial before completing the trial. The properties (ii') and (iii') of a nonlinear randomized trial are same with that of the classical randomized trial. However, the property (i') shows that there is no deterministic law for the nonlinear randomized trial. Thus, based on the properties (ii') and (iii'), we introduce a countable state sample space $\Omega=\{\omega_{i}\}_{i\in \mathbb{Z}^{+}}$. Based on the property (i'), we consider to use a probability set $\mathcal{P}_{\Theta} = \{P_{\theta}: {\theta}\in \Theta \}$ to describe the uncertainty law of the nonlinear randomized trial.

In the countable state space, we introduce a convex compact domain $\mathcal{D}$ to describe a probability set $\mathcal{P}_{\Theta}$.
A probability set $\mathcal{P}_{\Theta} = \{P_{\theta}: {\theta}\in \Theta \}$ satisfies $\Theta \in \mathcal{A}=\{(\theta_i)_{i\in \mathbb{Z}^{+}}: 0\leq \theta_i\leq 1,\ \sum_{i\in \mathbb{Z}^{+}} \theta_i = 1\}$.
We employ a domain $\mathcal{D}$, a convex compact subset of $\mathcal{A}$, to describe a probability set $\mathcal{P}_{\Theta}$, which is denoted by
\begin{equation}
\label{eq:D}
{\mathcal{D}}=\{(\theta_{i})_{i\in \mathbb{Z}^{+}}:  \underline{f}_{i}(\theta_{1},\cdots,\theta_{i-1})\leq \theta_{i}\leq \overline{f}_{i} (\theta_{1},\cdots,\theta_{i-1}),\ \sum_{i\in \mathbb{Z}^{+}} \theta_i = 1\},
\end{equation}
where $\{f_{i}\}_{i\in \mathbb{Z}^{+}}$ are continuous functions, the lower bounds $\underline{f}_i$ are convex and the upper bounds $\overline{f}_i$ are concave, satisfying $\overline{f}_i - \underline{f}_i \leq c_i$ with $\{c_i\}_{i\in\mathbb{Z}^+}$ being a positive sequence such that $\sum_{i=1}^\infty c_i < \infty$.

\begin{remark}
\label{re:convex_compact}
By constraining the functions $\{f_{i}\}_{i\in \mathbb{Z}^{+}}$, the domain $\mathcal{D}$ in equation (\ref{eq:D}) is convex and compact.
The convexity of $\underline{f}_i$ and concavity of $\overline{f}_i$ yield a convex $\mathcal{D}$.
And compactness of $\mathcal{D}$ is guaranteed under the condition $\overline{f}_i - \underline{f}_i \leq c_i$, where $\{c_i\}_{i\in\mathbb{Z}^+}$ is a positive sequence satisfying $\sum_{i=1}^\infty c_i < \infty$. This follows from the characterization of compact sets in infinite-dimensional product spaces (Theorem 3.28 in \cite{Aliprantis2006}), as the summability condition ensures total boundedness and closedness of $\mathcal{D}$.
\end{remark}

\begin{remark}
When there is no model uncertainty, the lower and upper bounds of the parameter $\theta_i$ in $\mathcal{D}$ coincide, i.e. $\underline{f}_i = \overline{f}_i$ for all $i\in \mathbb{Z}^{+}$. Thus the domain $\mathcal{D}$ degenerates to a singleton set containing only the deterministic probability measure $P$.
\end{remark}

\begin{example}
A finite sample state space represents a canonical special case of a countable state space. For a finite state space $\Omega=\{\omega_1,\dots,\omega_n\}$, the corresponding domain $\mathcal{D}$ defined in (\ref{eq:D}) is an $(n-1)$-dimensional system:
$$
\mathcal{D} = \left\{ (\theta_1,\dots,\theta_{n-1}) \,:\, \underline{f}_1 \leq \theta_1 \leq \overline{f}_1, \ \dots, \ \underline{f}_{n-1}(\theta_1,\dots,\theta_{n-2}) \leq \theta_{n-1} \leq \overline{f}_{n-1}(\theta_1,\dots,\theta_{n-2}) \right\},
$$
where $\underline{f}_i$ and $\overline{f}_i$ are convex and concave, respectively. It is obvious that $\mathcal{D}$ is convex and compact.
To facilitate analysis, the following examples employ a finite state space to explicitly construct the structure of $\mathcal{D}$.

Let $\Omega=\{\omega_{1}, \omega_{2}\}$, the corresponding probability set $\mathcal{P}_{\Theta}$ satisfies $\Theta \in \mathcal{A}=\{(\theta_1,\theta_2): 0\leq \theta_1\leq 1,\ 0\leq \theta_2\leq 1,\ \theta_1+\theta_2=1\}$. An example of the domain is ${\mathcal{D}}=\{\theta_1: 0.2 \le \theta_1 \le 0.5 \}$.

Let $\Omega=\{\omega_{1}, \omega_{2}, \omega_3\}$, the corresponding probability set $\mathcal{P}_{\Theta}$ satisfies $\Theta \in \mathcal{A}=\{(\theta_1, \theta_2, \theta_3): 0\leq \theta_1\leq 1,\ 0\leq \theta_2\leq 1,\ 0\leq \theta_3\leq 1,\ \theta_1+ \theta_2+ \theta_3=1\}$. An example of the domain is ${\mathcal{D}}=\{(\theta_{1},\theta_{2}):  0\leq \theta_1\leq 0.5,\ 0\leq \theta_2\leq 0.5-\theta_1\}$.
\end{example}

With a given countable state space $\Omega=\{\omega_{i}\}_{i\in \mathbb{Z}^{+}}$, let $\mathcal{H}$ be a linear space of real valued functions defined on $\Omega$, and suppose that $\mathcal{H}$ satisfies: $|X|<\infty$ if $X\in\mathcal{H}$. Given a random variable $X\in \mathcal{H}$ satisfying $X(\omega_i)=a_i,\ i\in \mathbb{Z}^{+}$, the expectation of $X$ under each $P_{\theta}$ is expressed as
$$
E_{\theta}[X]=\sum_{i\in \mathbb{Z}^{+}} X(\omega_i) P_\theta(\{\omega_i\}) =\sum_{i\in \mathbb{Z}^{+}} a_i \theta_i.
$$
Note that, there is a family of linear expectations $\{E_{\theta}: \theta \in \mathcal{D}\}$, thus it is natural to consider the upper and lower bounds of linear expectations. Due to the dual relation between the upper and lower bounds of expectations, i.e. $\sup_{\theta \in \mathcal{D}} E_{\theta}[X]=-\inf_{\theta \in \mathcal{D}} E_{\theta}[-X]$, we just concentrate on the upper bounds of expectations, analogous results hold for the lower bounds of expectations. Hence, we introduce the definition of upper expectation, this term can be found in Chapter 6.2 of Peng \cite{Peng2019}.

\begin{definition}[Upper expectation]
\label{de:upperE}
Let $X$ be a random variable defined on the countable state space $(\Omega, \mathcal{H})$. The upper expectation of $X$ is defined as
\begin{equation}
\label{eq:deupper}
\sup_{\theta \in \mathcal{D}} E_{\theta}[X]:=\sup_{\theta \in \mathcal{D}}\sum_{i\in \mathbb{Z}^{+}} X(\omega_i) P_\theta(\{\omega_i\}).
\end{equation}
\end{definition}

\begin{remark}
\label{re:subexp}
Sublinear expectation $\mathbb{E}[\cdot]$ was developed by Peng \cite{Peng2004}. Let $\Omega$ be a given sample space and $\mathcal{H}$ satisfies (1) $c\in \mathcal{H}$ for each constant $c$; (2) $|X|\in\mathcal{H}$ if $X\in\mathcal{H}$. A sublinear expectation $\mathbb{E}$ is a functional $\mathbb{E}[\cdot]: \mathcal{H}\to \mathbb{R}$ satisfing

(i). Monotonicity: \quad $\mathbb{E}[X]\leq \mathbb{E}[Y]$\quad if $X \le Y$;

(ii). Constant preserving: \quad $\mathbb{E}[c]=c$\quad for $c\in \mathbb{R}$;

(iii). Sub-additivity: \quad $\mathbb{E}[X+Y]\leq \mathbb{E}[X]+\mathbb{E}[Y]$;

(iv). Positive homogeneity: \quad $\mathbb{E}[\lambda X]=\lambda \mathbb{E}[X]$\quad for $\lambda\geq 0$.

\noindent In addition, sublinear expectation can be expressed as a supremum of linear expectations
\begin{equation}
\label{eq:represent}
\mathbb{E}[X]=\sup_{\theta\in\Theta}E_{\theta}[X].
\end{equation}

It is obviously that, the upper expectation in Definition \ref{de:upperE} satisfies properties (i)-(iv) as stated. This implies its equivalence to the sublinear expectation framework of Peng \cite{Peng2019} when $\Omega$ is a countable state space equipped with the convex compact domain $\mathcal{D}$ defined in (\ref{eq:D}).
This equivalence justifies the adoption of the term sublinear expectation rather than upper expectation within the following paper.
\end{remark}

\subsection{Calculation of sublinear expectation}

Based on sublinear expectation under countable state space, we develop an explicitly calculation formula through the analytical framework of multiple integrals.
This formula reduces multi-dimensional optimization complexities to tractable one-dimensional formulations, thereby significantly improving computational efficiency.

\begin{theorem}[Repeated summation formula]
\label{theo:calculate}
Let $X$ be a random variable defined on the countable state space $(\Omega, \mathcal{H})$. The sublinear expectation can be calculated by
\begin{equation}
\mathbb{E}[X]=\sup_{\theta_1\in I_1} \cdots \sup_{\theta_{i}\in I_{i}}\cdots \left [\sum_{i\in \mathbb{Z}^{+}}X(\omega_i)\theta_i \right],\quad  i\in \mathbb{Z}^{+},
\end{equation}
where $I_{i}$ denotes the projection constraint of $\mathcal{D}$ with $I_{i}=\left [\underline{f}_{i}(\theta_{1}, \cdots, \theta_{i-1}),\ \overline{f}_{i} (\theta_{1},\cdots,\theta_{i-1}) \right]$.
\end{theorem}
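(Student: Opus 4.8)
The plan is to prove the stated identity through the two inequalities $\mathbb{E}[X]\le V$ and $\mathbb{E}[X]\ge V$, where I write $V$ for the iterated supremum on the right-hand side; the whole point is that the single joint supremum defining $\mathbb{E}[X]$ over the compact domain $\mathcal{D}$ decouples into the sequence of one-dimensional suprema over the projected intervals $I_i$. The inequality $\mathbb{E}[X]\le V$ is the easy direction: for any fixed $\theta=(\theta_i)_{i\in\mathbb{Z}^+}\in\mathcal{D}$, the defining constraints of (\ref{eq:D}) guarantee $\theta_i\in I_i=[\underline{f}_i(\theta_1,\dots,\theta_{i-1}),\overline{f}_i(\theta_1,\dots,\theta_{i-1})]$ for every $i$; hence the value $\sum_i X(\omega_i)\theta_i$ is not larger than the nested supremum obtained by admitting, at each stage $i$, the coordinate $\theta_i$ as one feasible choice. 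Taking the supremum over $\theta\in\mathcal{D}$ yields $\mathbb{E}[X]\le V$.

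For the reverse inequality I would first settle the finite-dimensional truncation. For each fixed $n$, the elementary identity $\sup_x\sup_y g(x,y)=\sup_{(x,y)}g(x,y)$, applied inductively across the coordinates $\theta_1,\dots,\theta_n$, shows that the iterated supremum over $\theta_1\in I_1,\dots,\theta_n\in I_n$ equals the joint supremum of the objective over the truncated feasible set $\{(\theta_1,\dots,\theta_n):\theta_i\in I_i(\theta_1,\dots,\theta_{i-1})\}$. Because each $I_i$ is the projection of $\mathcal{D}$ onto its $i$-th coordinate, every such admissible truncated vector extends to a full element of $\mathcal{D}$, so the truncated joint suprema for the two formulations coincide. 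It then remains to pass from these truncated problems to the full ones and to interchange this passage with the suprema.

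This passage is controlled by the summability hypothesis $\sum_i c_i<\infty$ together with the compactness of $\mathcal{D}$ from Remark \ref{re:convex_compact}. Writing $M=\sup_i|X(\omega_i)|$ (finite under the natural integrability on $\mathcal{H}$), the key estimate is the uniform tail bound $\sup_{\theta\in\mathcal{D}}\big|\sum_{i>n}X(\omega_i)\theta_i\big|\le M\sup_{\theta\in\mathcal{D}}\sum_{i>n}\theta_i$, and I would show $\sup_{\theta\in\mathcal{D}}\sum_{i>n}\theta_i\to 0$ by a standard subsequence-and-contradiction argument: if it failed, compactness of $\mathcal{D}$ in the product topology would furnish a coordinatewise limit $\theta^{\ast}\in\mathcal{D}$ whose coordinates still sum to one, contradicting the persistence of a fixed amount of tail mass. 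This gives uniform convergence on $\mathcal{D}$ of the truncated objective to $\sum_i X(\omega_i)\theta_i$, so the truncated joint suprema converge to $\mathbb{E}[X]$ while the same bound forces the truncated iterated suprema to converge to $V$. Combined with the finite-dimensional identity of the previous step, this yields $\mathbb{E}[X]=V$.

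The step I expect to be the main obstacle is reconciling the global normalization constraint $\sum_i\theta_i=1$ with the purely local interval constraints defining the $I_i$. The iterated supremum optimizes each $\theta_i$ with no explicit reference to the sum, so one must verify that the projected intervals already encode the existence of an admissible tail restoring normalization, and that optimizing coordinatewise cannot exhaust the unit mass prematurely. It is precisely here, rather than in any finite combinatorics, that compactness of $\mathcal{D}$ and the decay imposed by $\sum_i c_i<\infty$ become indispensable, through the uniform tail control described above.
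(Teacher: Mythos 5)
Your first inequality ($\mathbb{E}[X]\le V$, i.e.\ the joint supremum over $\mathcal{D}$ is dominated by the iterated one) is fine and is the same as the paper's step (\ref{eq:cal2}). The genuine gap is in the reverse direction, at exactly the point you yourself flag as ``the main obstacle'': the claim that every truncated vector $(\theta_1,\dots,\theta_n)$ with $\theta_i\in I_i$ extends to a full element of $\mathcal{D}$. You justify this by saying that each $I_i$ ``is the projection of $\mathcal{D}$ onto its $i$-th coordinate,'' but the theorem defines $I_i$ as the interval $[\underline{f}_{i}(\theta_1,\dots,\theta_{i-1}),\overline{f}_{i}(\theta_1,\dots,\theta_{i-1})]$, and nothing in the hypotheses forces this interval to coincide with the true conditional projection of $\mathcal{D}$ once the global constraint $\sum_i\theta_i=1$ is imposed. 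The uniform tail estimate $\sup_{\theta\in\mathcal{D}}\sum_{i>n}\theta_i\to 0$ that you invoke controls the truncation error of the objective for points that already lie in $\mathcal{D}$; it says nothing about whether a coordinatewise-optimal prefix admits an admissible tail. Concretely, take $\underline{f}_i\equiv 0$ for all $i$, $\overline{f}_1\equiv 1$ and $\overline{f}_i\equiv 2^{-i}$ for $i\ge 2$ (so $c_1=1$, $c_i=2^{-i}$, $\sum_i c_i<\infty$, and $\mathcal{D}$ is nonempty, convex and compact). Since $\sum_{i\ge 2}\overline{f}_i=\tfrac12$, every $\theta\in\mathcal{D}$ has $\theta_1\ge\tfrac12$, so for $X=-I_{\{\omega_1\}}$ one gets $\sup_{\theta\in\mathcal{D}}E_\theta[X]=-\tfrac12$, while the iterated supremum over the stated intervals is $\sup_{\theta_1\in[0,1]}(-\theta_1)=0$. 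The identity therefore fails under the stated hypotheses, so no amount of tail control will repair the extendability step; one must either redefine $I_i$ as the genuine conditional projection of $\mathcal{D}$, or add the assumption that for every admissible prefix the tail system $\{\underline{f}_i\le\theta_i\le\overline{f}_i,\ i>n;\ \sum_{i>n}\theta_i=1-\sum_{i\le n}\theta_i\}$ is solvable.

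For what it is worth, the paper's own proof has the same hole: it asserts the inequality (\ref{eq:cal1}) (iterated supremum $\le\sup_{\theta\in\mathcal{D}}G(\theta)$) with no argument, and uses compactness only to claim the iterated supremum is attained. So you have correctly located the crux --- reconciling the local interval constraints with the normalization $\sum_i\theta_i=1$ --- and your finite-truncation strategy is the right shape for a corrected statement, but as written the proposal does not (and cannot) close the gap. A minor secondary point: your constant $M=\sup_i|X(\omega_i)|$ is an extra boundedness assumption; the paper only requires $|X(\omega_i)|<\infty$ pointwise, under which $\sum_i X(\omega_i)\theta_i$ need not even converge, so some integrability hypothesis should be made explicit in any case.
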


\begin{proof}
From Definition \ref{de:upperE}, we have
$$
\mathbb{E}[X]=\sup_{\theta \in \mathcal{D}} E_{\theta}[X] =\sup_{\theta \in \mathcal{D}}\sum_{i\in \mathbb{Z}^{+}} X(\omega_i) P_\theta(\{\omega_i\})=\sup_{\theta \in \mathcal{D}}\sum_{i\in \mathbb{Z}^{+}} X(\omega_i) \theta_i.
$$
Let
$$
G(\theta)=\sum_{i\in \mathbb{Z}^{+}} X(\omega_i) \theta_i.
$$
On the one hand, let $y =\sup_{\theta \in \mathcal{D}}G(\theta)$. Thus for each $(\theta_{i})_{i\in Z^+}\in \mathcal{D}$, the iterated supremum satisfies
\begin{equation}
\label{eq:cal1}
\sup_{\theta_1\in I_1} \cdots \sup_{\theta_{i}\in I_{i}} \cdots G(\theta) \le y=\sup_{\theta \in \mathcal{D}}G(\theta), \quad i\in \mathbb{Z}^{+}.
\end{equation}
On the contrary, let $z=\sup_{\theta_1\in I_1} \cdots \sup_{\theta_{i}\in I_{i}}\cdots G(\theta),\ i\in \mathbb{Z}^{+}$. Since $\mathcal{D}$ is convex and compact, the supremum $z$ is attained at some $\theta_i^*\in I_i, i\in \mathbb{Z}^{+}$, i.e., $z =  G(\theta_1^*, \cdots, \theta_{i}^*,\cdots)$. Then for any $\theta_i$, $G(\theta) \le z$, it is obvious that
\begin{equation}
\label{eq:cal2}
\sup_{\theta \in \mathcal{D}}G(\theta) \le z = \sup_{\theta_1\in I_1} \cdots \sup_{\theta_{i}\in I_{i}}\cdots G(\theta),\quad i\in \mathbb{Z}^{+}.
\end{equation}
Combining inequalities (\ref{eq:cal1}) and (\ref{eq:cal2}), we conclude
$$
\sup_{\theta \in \mathcal{D}} E_{\theta}[X]=\sup_{\theta_1\in I_1} \cdots \sup_{\theta_{i}\in I_{i}}\cdots \left [\sum_{i\in \mathbb{Z}^{+}}X(\omega_i)\theta_i \right],\quad i\in \mathbb{Z}^{+}.
$$
This completes the proof.
\end{proof}

In the following, we present two examples to verify Theorem \ref{theo:calculate}.
\begin{example}
\label{exam:X_size}
Let $\Omega =\{\omega_1,\omega_2\}$ and domain ${\mathcal{D}}= \{\theta_1: 0.2 \le \theta_1 \le 0.5 \}$. By Theorem \ref{theo:calculate}, if $X(\omega_1)\ge X(\omega_2)$, the sublinear expectation can be calculated as follows
\begin{align*}
\mathbb{E}[X]=\sup_{\theta_1 \in [0.2,0.5]} [(a_1-a_2) \theta_1 + a_2]=0.5 a_1 +0.5 a_2.
\end{align*}
Conversely, if $X(\omega_1)<X(\omega_2)$, we derive $\mathbb{E}[X]=0.2 a_1+0.8 a_2$. For the sake of convenience, we adopt a fixed ordering $X(\omega_1)\ge X(\omega_2)\ge \cdots$ in subsequent examples. Analogous results hold for alternative ordering configurations of $X(\omega_i)$.

Let $\Omega =\{\omega_1,\omega_2,\omega_3\}$ and domain ${\mathcal{D}}=\{(\theta_{1},\theta_{2}):  0\leq \theta_1\leq 0.5,\ 0\leq \theta_2\leq 0.5-\theta_1\}$. Without lose of generality, we assume that $X(\omega_1) > X(\omega_2) > X(\omega_3)$. Based on Theorem \ref{theo:calculate}, the sublinear expectation can be calculated as follows
\begin{align*}
\mathbb{E}[X]
&=\sup_{\theta_1 \in [0,0.5]} \sup_{\theta_2 \in [0,0.5-{\theta_1}]} [(a_1-a_3)\theta_1+ (a_2-a_3)\theta_2]+a_3\\
&=\sup_{\theta_1 \in [0,0.5]} [(a_1-a_3)\theta_1+ (a_2-a_3) (0.5-\theta_1)]+a_3 =0.5 (a_1 + a_3).
\end{align*}
\end{example}

From the above examples, it is evident that when the boundary of domain $\mathcal{D}$ is linear, that is $\underline{f}_i$ and $\overline{f}_i$ are linear functions, the supremum of $E_{\theta}[X]$ is achieved on the boundary.
Conversely, for nonlinear boundaries $\underline{f}_i$ or $\overline{f}_i$, the irregular domain $\mathcal{D}$ can be transformed into a canonical rectangular domain $\mathcal{M}$ via variable substitution.
This reduction enables the supremum to be localized on the boundary of $\mathcal{M}$, substantially mitigating computational complexity. We then derive the coordinate transform formula to operationalize this framework.

\begin{lemma}[Transform formula]
\label{lemma:transform}
Let $h(\cdot)$ and $g(\cdot)$ be continuous functions, and let $\mathcal{D}=\left \{(\theta_{i})_{i\in \mathbb{Z}^{+}}\right \}$ and $\mathcal{M}=\left \{(\delta_{i})_{i\in \mathbb{Z}^{+}}\right \} $ be two convex compact domains.
Consider a componentwise bijection $T:\mathcal{M}\to\mathcal{D}$ defined by $\theta_{i}=\theta_{i}(\delta_1, \cdots, \delta_{i})$ for each $i\in \mathbb{Z}^{+}$.
Assume the Jacobian determinant of the $i$-dimensional projection satisfies:
$$
J(\delta_1,\cdots, \delta_{i})=\frac{\partial (\theta_1,\cdots,\theta_{i})}{\partial (\delta_1,\cdots, \delta_{i})} \ne 0,\quad \forall i\in\mathbb{Z}^+.
$$
Then
$$
\sup_{\theta_1\in I_1} \cdots \sup_{\theta_{i}\in I_{i}} \cdots h(\theta_1,\cdots,\theta_{i}, \cdots)
=\sup_{\delta_1\in J_1 } \cdots \sup_{\delta_{i} \in J_{i}} \cdots g(\delta_1,\cdots,\delta_{i}, \cdots),\quad i\in\mathbb{Z}^{+},
$$
where $I_i$ and $J_i$ denote the projection constraints of $\mathcal{D}$ and $\mathcal{M}$, respectively.
\end{lemma}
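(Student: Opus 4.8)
The plan is to reduce the claimed equality of iterated suprema to an equality of full suprema over the two domains, and then invoke Theorem \ref{theo:calculate} to pass back to the iterated form on each side. Throughout I read $g$ as the pullback of $h$ along $T$, that is $g(\delta_1,\delta_2,\cdots)=h(\theta_1(\delta_1),\theta_2(\delta_1,\delta_2),\cdots)$, so that $g=h\circ T$; this is the natural interpretation of the statement, since the lemma is a change-of-variables identity.

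First I would exploit bijectivity. Because $T:\mathcal{M}\to\mathcal{D}$ is a bijection it is in particular surjective, so $T(\mathcal{M})=\mathcal{D}$ and the two attained value sets coincide:
$$
\{h(\theta):\theta\in\mathcal{D}\}=\{h(T(\delta)):\delta\in\mathcal{M}\}=\{g(\delta):\delta\in\mathcal{M}\}.
$$
Taking the supremum of identical sets of real numbers gives at once $\sup_{\theta\in\mathcal{D}}h(\theta)=\sup_{\delta\in\mathcal{M}}g(\delta)$. Notably this step uses only surjectivity; unlike the integral analogue, the Jacobian multiplier never appears, so the nonvanishing-Jacobian hypothesis is not consumed here.

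Next I would apply the argument underlying Theorem \ref{theo:calculate} to each domain separately. Since $\mathcal{D}$ is convex and compact and $h$ is continuous, $h$ attains its supremum on $\mathcal{D}$, and the two inequalities of that proof carry over verbatim: the iterated supremum is bounded above by the full supremum because every feasible nested selection lies in $\mathcal{D}$, and the full supremum is bounded above by the iterated supremum because the maximizer is itself a feasible selection. Hence
$$
\sup_{\theta_1\in I_1}\cdots\sup_{\theta_i\in I_i}\cdots h(\theta_1,\cdots,\theta_i,\cdots)=\sup_{\theta\in\mathcal{D}}h(\theta).
$$
The identical reasoning applied to the convex compact $\mathcal{M}$, the continuous $g$, and the projection intervals $J_i$ yields $\sup_{\delta_1\in J_1}\cdots\sup_{\delta_i\in J_i}\cdots g=\sup_{\delta\in\mathcal{M}}g$. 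Chaining these three equalities closes the proof.

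The place that needs care, and what I expect to be the main obstacle, is twofold. First, Theorem \ref{theo:calculate} was stated for the specific linear functional $G(\theta)=\sum_i X(\omega_i)\theta_i$, so I must verify that its proof invokes only continuity of the objective and convex compactness of the domain (it does, through attainment of the supremum) and therefore transfers to the general continuous $h$ and $g$. Second, I would make explicit the role of the componentwise triangular structure $\theta_i=\theta_i(\delta_1,\cdots,\delta_i)$ together with $J\neq 0$: for frozen $\delta_1,\cdots,\delta_{i-1}$ the map $\delta_i\mapsto\theta_i$ is a monotone bijection of $J_i$ onto $I_i$, which guarantees that $\mathcal{M}$ is itself of the canonical nested form with well-defined projection intervals $J_i$, so that the iterated supremum over $\mathcal{M}$ is meaningful and Theorem \ref{theo:calculate} is genuinely applicable on the $\delta$-side. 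This triangular-monotonicity observation is precisely what aligns the two nested structures, whereas the Jacobian factor, in contrast to the integral setting, does not enter the supremum identity itself.
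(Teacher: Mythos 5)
Your proposal is correct and takes essentially the same route as the paper's proof: both read $g$ as the pullback $h\circ T$, use the bijection to identify the value sets of $h$ on $\mathcal{D}$ and $g$ on $\mathcal{M}$, and use attainment of the supremum on the convex compact domain to pass between the iterated and full suprema. Your write-up is somewhat more explicit than the paper's (in particular, about the triangular structure making the intervals $J_i$ well defined and about the Jacobian hypothesis playing no role in the supremum identity itself), but the underlying argument is the same.
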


\begin{proof}
Let
$$
y=\sup_{\theta_1\in I_1} \cdots \sup_{\theta_{i}\in I_{i}} \cdots h(\theta_1,\cdots, \theta_{i},\cdots),\quad i\in \mathbb{Z}^{+}.
$$
Owing $\mathcal{D}$ is a convex compact domain and $h$ is continuous, the supremum value of $y$ is attained at some $\theta_i^* \in I_i$ for $i\in\mathbb{Z}^+$, that is $y = h(\theta_1^*, \cdots, \theta_{i}^*, \cdots), i\in \mathbb{Z}^{+}$.
Since $T$ is a bijection, then
$$
h(\theta_1,\cdots,\theta_{i},\cdots)=h(\theta_1(\delta_1, \cdots, \delta_{i}), \cdots, \theta_{i}(\delta_1, \cdots, \delta_{i}),\cdots)=g(\delta_1,\cdots,\delta_{i},\cdots),\quad i\in Z^+,
$$
where $g(\delta_{i})=h(T)$.
Since $h(\theta_1,\cdots, \theta_{i},\cdots) \le y$, we have $g(\delta_1,\cdots,\delta_{i},\cdots) \le y$. Thus
\begin{equation}
\label{eq:trans1}
\sup_{\delta_1\in J_1 } \cdots \sup_{\delta_{i} \in J_{i}}\cdots g(\delta_1,\cdots,\delta_{i},\cdots) \le y = \sup_{\theta_1\in I_1} \cdots \sup_{\theta_{i}\in I_{i}}\cdots h(\theta_1,\cdots, \theta_{i},\cdots),\quad i\in Z^+.
\end{equation}
On the contrary, let
$$
z=\sup_{\delta_1\in J_1 } \cdots \sup_{\delta_{i} \in J_{i}}\cdots g(\delta_1,\cdots,\delta_{i},\cdots),\quad i\in Z^+.
$$
Using the similar manner in the proof of inequality (\ref{eq:trans1}), we have $y\leq z$, which completes the proof.
\end{proof}

\begin{example}
Let $\Omega =\{\omega_1,\omega_2,\omega_3\}$ and ${\mathcal{D}}=\{(\theta_{1},\theta_{2}):  0\leq \theta_1\leq 0.5,\ 0\leq \theta_2\leq \sqrt{\theta_1}\}$. Let $\delta_1=\theta_1, \delta_2= \frac{\theta_{2}^{2}} {\theta_1}$ (if $\theta_1=0$, let $\delta_2=0$), we can transform a irregular domain $\mathcal{D}$ into a rectangular domain ${\mathcal{M}}=\{(\delta_1,\delta_2):  0\leq \delta_1\leq 0.5,\ 0\leq \delta_2\leq 1\}$. Without loss of generality, we assume that $X(\omega_1)>X(\omega_2)>X(\omega_3)$, based on Theorem \ref{theo:calculate} and Lemma \ref{lemma:transform}, the sublinear expectation can be calculated as follows
\begin{align*}
\mathbb{E}[X]
&=\sup_{\theta_1 \in [0,0.5]} \sup_{\theta_2 \in [0,\sqrt{\theta_1}]}[(a_1-a_3)\theta_1+ (a_2-a_3)\theta_2]+ a_3\\
&=\sup_{\delta_1 \in [0,0.5]} \sup_{\delta_2 \in [0,1]} [(a_1-a_3)\delta_1+ (a_2-a_3)\sqrt{\delta_1 \delta_2}]+ a_3\\
&=\sup_{\delta_1 \in [0,0.5]} [(a_1-a_3)\delta_1+ (a_2-a_3)\sqrt{\delta_1}] +a_3=\frac{1}{2}a_1+ \frac{\sqrt{2}}{2}a_2+ \frac{ {1-\sqrt{2}}}{2}a_3.
\end{align*}
%{\color{red}We can obtain different results with different si\mathbb{Z}e assumptions of $X(\omega_i), i=1,2,3$. Such that $\mathbb{E}=\frac{1}{2} a_1+\frac{1}{2} a_3$ based on the  assumption $X(\omega_1)>X(\omega_3)>X(\omega_2)$.}

In the following, we consider another example where the domain $\mathcal{D}$ is a circle, i.e. \\ $ \mathcal{D}= \left \{\left(\theta_1,\theta_2 \right) : \left(\theta_1-0.25 \right)^2+ \left(\theta_2-0.25\right)^2 \le \left(0.25\right)^2, \theta_1,\theta_2\geq 0 \right\}$.
According to the polar coordinate transformation formula,
$$
\left\{\begin{matrix}
\theta_1=\gamma \cdot \cos(\delta)+\frac{1}{4}, \\
\theta_2=\gamma \cdot \sin(\delta)+\frac{1}{4},
\end{matrix}\right.
$$
we can transform a circle domain $\mathcal{D}$ into a rectangular domain  ${\mathcal{M}}=\left \{\left(\gamma,\delta \right):  0\leq \gamma \leq 0.25,\ 0\leq \delta \leq 2\pi \right\}$. We assume that $X(\omega_1)=2,\ X(\omega_2)=2,\ X(\omega_3)=1$, based on Theorem \ref{theo:calculate} and Lemma \ref{lemma:transform}, the sublinear expectation can be calculated as follows
\begin{align*}
\mathbb{E}[X]
&=\sup_{\theta_1 \in [0,0.5]} \sup_{\theta_2 \in [0,\sqrt{\theta_1 \left(0.5-\theta_1 \right)}+ 0.25]} [(a_1-a_3)\theta_1+ (a_2-a_3)\theta_2]+ a_3\\
&=\sup_{\gamma \in [0,0.25]} \sup_{\delta \in [0,2\pi]} [(a_1-a_3)\gamma \cos(\delta)+ (a_2-a_3)\gamma \sin(\delta)]+ \frac{1}{2}+ a_3\\
&=\sup_{\gamma \in [0,0.25]} \sqrt{2} \gamma+\frac{3}{2}=\frac{6+\sqrt{2}}{4}.
\end{align*}
\end{example}

\section{Convergence theorems and Law of large numbers}
\label{sec:control}

\subsection{Convergence theorems}

Within the framework of sublinear expectations under countable state space, we establish the Monotone convergence theorem, Fatou's lemma and Dominated convergence theorem below.
Prior work by Denis et al. \cite{Peng2011} established a Monotone convergence theorem under sublinear expectation, asserting that if $\mathcal{P}_{\Theta}$ is weakly compact and $\{X_m\}_{m\ge 1}\subset \mathbb{L}_{c}^{1}$ satisfies $X_m\downarrow X, q.s.$, then $\mathbb{E}[X_m]\downarrow \mathbb{E}[X]$.
And Lemma 7-8 in \cite{Peng2011} further demonstrated that weak compactness strictly implies relative compactness.
Our results relax the topological requirement on $\mathcal{P}_{\Theta}$ from weak compactness to relative compactness, thereby broadening the scope of applicability.

\begin{remark}
\label{re:rela_compact}
The convexity and compactness of domain $\mathcal{D}$ in equation (\ref{eq:D}) ensure the relative compactness of probability sets $\mathcal{P}_{\Theta}$.
By assumption, since $\overline{f}_i - \underline{f}_i \le c_i$ and $\underline{f}_i \le \theta_i \le \overline{f}_i$, we have $\theta_i \le \underline{f}_i + c_i$ for all $\theta\in\mathcal{D}$.
The convergence of $\sum_{i=1}^{\infty} \underline{f}_i$ follows from $\sum_{i=1}^{\infty} \underline{f}_i \le \sum_{i=1}^{\infty} \theta_i =1$. For any $\epsilon>0$, choose $N_1, N_2 \in \mathbb{Z}^{+}$ such that $\sum_{i=N_1}^{\infty} c_i< \frac{\epsilon}{2},\ \sum_{i=N_2}^{\infty} \underline{f}_i< \frac{\epsilon}{2}$.
Let $N=\max \{N_1, N_2\}$, define the finite set $K=\left \{\omega_1,\cdots, \omega_{N-1} \right \}$, which is obviously compact. For its complement $K^c=\left \{\omega_{N}, \omega_{N+1}, \cdots \right \}$, the tail probability satisfies
$$
P_{\theta}(K^c)=\sum_{i=N}^{\infty} \theta_i \le \sum_{i=N}^{\infty} (\underline{f}_i+c_i) \le \sum_{i=N}^{\infty} \underline{f}_i + \sum_{i=N}^{\infty} c_i <\epsilon,
$$
for all $P_{\theta}\in\mathcal{P}_{\Theta}$. Thus, $\sup_{P_{\theta}\in\mathcal{P}} P_{\theta}(K^c)< \epsilon$.
Hence, $\mathcal{P}_{\Theta}$ is relatively compact by Theorem 6 in \cite{Peng2011}.
\end{remark}

As a prerequisite for proving the Monotone convergence theorem, we establish the equivalence between the relative compactness of probability sets $\mathcal{P}_{\Theta}$ and the regularity of the associated sublinear expectation $\mathbb{E}[\cdot]$ under countable state space. Here, regularity is defined by the condition that for each bounded sequence $\{X_m\}_{m\ge 1}$ satisfying $X_m\downarrow 0, q.s.$, it follows that $\mathbb{E}[X_m]\downarrow 0$.

\begin{lemma}
\label{lemma:regular}
Under the countable state space $(\Omega, \mathcal{H})$, $\mathbb{E}[\cdot]$ is regular if and only if $\mathcal{P}_{\Theta}$ is relatively compact.
\end{lemma}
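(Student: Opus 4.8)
The plan is to prove the two implications separately, using the tightness characterization of relative compactness from Theorem 6 in \cite{Peng2011} as the working definition. Since $\Omega=\{\omega_i\}_{i\in\mathbb{Z}^+}$ carries the discrete topology, its compact subsets are exactly the finite sets, so $\mathcal{P}_{\Theta}$ is relatively compact if and only if for every $\epsilon>0$ there is a finite set $K=\{\omega_1,\cdots,\omega_{N-1}\}$ with $\sup_{\theta\in\mathcal{D}}P_\theta(K^c)<\epsilon$. This matches the construction already used in Remark \ref{re:rela_compact}, and I would state it explicitly at the outset.

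For sufficiency (relative compactness $\Rightarrow$ regularity), let $\{X_m\}_{m\ge 1}$ be bounded with $0\le X_m\le M$ and $X_m\downarrow 0$ q.s. Fix $\epsilon>0$ and use relative compactness to choose a finite $K$ with $\sup_{\theta}P_\theta(K^c)<\epsilon/(M+1)$. A preliminary observation is that any polar singleton $\{\omega_i\}$ satisfies $\theta_i=0$ for all $\theta\in\mathcal{D}$, because $P_\theta(\{\omega_i\})=\theta_i$ and being polar means $\sup_\theta\theta_i=0$; hence polar states contribute nothing to any $E_\theta$. On the finitely many non-polar states of $K$ the q.s. monotone convergence $X_m\downarrow 0$ holds pointwise, so there is $m_0$ with $X_m(\omega_i)<\epsilon$ for all such states once $m\ge m_0$. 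I would then split $E_\theta[X_m]=\sum_{\omega_i\in K}X_m(\omega_i)\theta_i+\sum_{\omega_i\in K^c}X_m(\omega_i)\theta_i$, bound the first sum by $\epsilon\sum_i\theta_i\le\epsilon$ and the second by $M\,P_\theta(K^c)<\epsilon$, both uniformly in $\theta$. Taking $\sup_\theta$ gives $\mathbb{E}[X_m]\le 2\epsilon$ for $m\ge m_0$; since $\mathbb{E}[X_m]$ is non-negative and non-increasing by monotonicity of $\mathbb{E}$, it follows that $\mathbb{E}[X_m]\downarrow 0$.

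For necessity (regularity $\Rightarrow$ relative compactness) I would argue by contraposition. If $\mathcal{P}_{\Theta}$ is not relatively compact, then tightness fails, so there is $\epsilon_0>0$ with $\sup_\theta P_\theta(K^c)\ge\epsilon_0$ for every finite $K$. Define the tail indicators $X_m=\mathbf{1}_{\{\omega_m,\omega_{m+1},\dots\}}$, which are bounded by $1$ and satisfy $X_m(\omega_i)=0$ once $m>i$, hence $X_m\downarrow 0$ q.s. Yet $\mathbb{E}[X_m]=\sup_\theta P_\theta(\{\omega_m,\omega_{m+1},\dots\})\ge\epsilon_0$ for every $m$, so $\mathbb{E}[X_m]\not\downarrow 0$ and regularity fails. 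The only technical checks here are that the tail indicators belong to $\mathcal{H}$ and that ``compact $=$ finite'' in the discrete topology, both routine.

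The delicate point is the sufficiency direction, where pointwise q.s. convergence must be turned into an estimate uniform over the infinitely many measures $P_\theta$. This cannot hold on the infinite tail, and the role of relative compactness is precisely to render the tail negligible uniformly in $\theta$; once the tail is truncated, the residual finite set carries only pointwise convergence, which a Dini-type argument (trivial on a finite set) upgrades to uniformity, with polar states automatically neutralized since $\theta_i=0$ there. I expect this uniform control of the tail to be the main obstacle, and it is exactly where the summability hypothesis underlying relative compactness enters.
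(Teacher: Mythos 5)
Your proposal is correct and follows essentially the same route as the paper: sufficiency via tightness of $\mathcal{P}_{\Theta}$ (compact $=$ finite in the discrete topology) plus a split of $E_\theta[X_m]$ over $K$ and $K^c$, and necessity via the tail indicators $I_{\{\omega_m,\omega_{m+1},\dots\}}$, which the paper uses directly where you argue by contraposition. Your explicit observation that polar states satisfy $\theta_i=0$ for all $\theta\in\mathcal{D}$, so that q.s.\ convergence suffices on the finite set $K$, is a small but welcome refinement of the paper's bare assertion that $X_m\cdot I_K\downarrow 0$ uniformly.
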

\begin{proof}
As for "if" statement, by relative compactness of $\mathcal{P}_{\Theta}$, there exists a compact set $K$ in the countable state space $\Omega$ such that $\sup_{P_{\theta}\in \mathcal{P}_{\Theta}} P_{\theta}(K^c)<\epsilon,\ \forall \epsilon >0$.
Let $\{X_m\}_{m\ge 1}$ be a bounded sequence with $X_m \downarrow 0$ and $|X_m|\le C$ for constant $0\le C<\infty$.
It is obviously that $X_m\cdot I_K\downarrow 0$ uniformly, then there exists $N\in\mathbb{Z}^{+}$ such that $X_m\cdot I_K< \epsilon$ for all $m\ge N$.
We have
$$
\mathbb{E}[X_m]\le \mathbb{E}[X_m\cdot I_K]+\mathbb{E}[X_m \cdot I_{K^c}]\le \epsilon + C\cdot \sup_{P_{\theta}\in\mathcal{P}_{\Theta}} P_{\theta}(K^c) \le (1+C)\epsilon.
$$
As $\epsilon>0$ is arbitrary, we have $\mathbb{E}[X_m]\downarrow 0$.

On the other hand, assume $\mathbb{E}$ is regular. under countable state space, there exists an increasing compact set sequence $\{K_n\}_{n\ge 1}$ such that $\bigcup_{n=1}^\infty K_n = \Omega$. Define $X_n = I_{K_n^c}$, which satisfies $X_n \downarrow 0$ and $X_n$ is bounded.
By regularity,
$$
\sup_{P_{\theta}\in\mathcal{P}_{\Theta}} P_{\theta}(K_n^c) = \mathbb{E}[X_n] \downarrow 0.
$$
Hence, for any $\epsilon > 0$, there exists $N\in\mathbb{Z}^{+}$ such that $\sup_{P_{\theta}\in\mathcal{P}_{\Theta}} P(K_N^c) < \epsilon$, verifying that $\mathcal{P}_{\Theta}$ is relatively compact.
\end{proof}

\begin{remark}
Theorem 12 in \cite{Peng2011} constructed an analogous equivalence under the restriction to continuous bounded random variables.
Lemma \ref{lemma:regular} extends this result by removing  the continuity assumption, thereby generalizing the scope to all bounded measurable random variables.
\end{remark}

\begin{theorem}[Monotone convergence theorem for bounded random variables]
\label{theo:monotone}
Let $\{X_m\}_{m\geq 1}$ and $X$ be a bounded random sequence and variable, respectively, defined on the countable state space $(\Omega, \mathcal{H})$.

(1) Let $X_m\uparrow X, q.s.$ Then $\mathbb{E}[X_m]\uparrow \mathbb{E}[X]$.

(2) Let $X_m\downarrow X, q.s.$ Then $\mathbb{E}[X_m]\downarrow \mathbb{E}[X]$.
\end{theorem}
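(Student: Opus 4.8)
The plan is to reduce both statements to the regularity of $\mathbb{E}[\cdot]$ established in Lemma \ref{lemma:regular}. First I would observe that the convex compact domain $\mathcal{D}$ makes $\mathcal{P}_{\Theta}$ relatively compact (Remark \ref{re:rela_compact}), so Lemma \ref{lemma:regular} guarantees that $\mathbb{E}$ is regular: for every bounded sequence $\{Y_m\}_{m\ge 1}$ with $Y_m\downarrow 0$ q.s. one has $\mathbb{E}[Y_m]\downarrow 0$. The whole argument then consists of converting the monotone convergence of $\{X_m\}$ to $X$ into the decreasing-to-zero convergence of an associated nonnegative sequence, and controlling the gap $\mathbb{E}[X_m]-\mathbb{E}[X]$ by the sub-additivity and monotonicity axioms recalled in Remark \ref{re:subexp}.

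For part (2), I would set $Y_m:=X_m-X$. Since $X_m\downarrow X$ q.s. and both $X_m$ and $X$ are bounded, the sequence $\{Y_m\}$ is nonnegative, bounded, and satisfies $Y_m\downarrow 0$ q.s.; regularity therefore yields $\mathbb{E}[Y_m]\downarrow 0$. Monotonicity applied to $X_m\ge X$ gives $\mathbb{E}[X_m]\ge \mathbb{E}[X]$, while sub-additivity applied to $X_m=(X_m-X)+X$ gives $\mathbb{E}[X_m]\le \mathbb{E}[Y_m]+\mathbb{E}[X]$. Combining these two bounds produces the sandwich
$$
0\le \mathbb{E}[X_m]-\mathbb{E}[X]\le \mathbb{E}[Y_m],
$$
and letting $m\to\infty$ gives $\mathbb{E}[X_m]\downarrow \mathbb{E}[X]$. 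For part (1) the argument is symmetric: I would set $Z_m:=X-X_m$, which is nonnegative, bounded, and decreases to $0$ q.s. when $X_m\uparrow X$ q.s., so regularity again gives $\mathbb{E}[Z_m]\downarrow 0$. Here monotonicity ($X_m\le X$) yields $\mathbb{E}[X_m]\le \mathbb{E}[X]$, and sub-additivity applied to $X=X_m+(X-X_m)$ yields $\mathbb{E}[X]\le \mathbb{E}[X_m]+\mathbb{E}[Z_m]$, so that
$$
0\le \mathbb{E}[X]-\mathbb{E}[X_m]\le \mathbb{E}[Z_m]\downarrow 0,
$$
which is exactly $\mathbb{E}[X_m]\uparrow \mathbb{E}[X]$.

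I expect the substantive work to be already absorbed into Lemma \ref{lemma:regular} and Remark \ref{re:rela_compact}, so the remaining difficulty is a matter of care rather than depth. The point requiring attention is the interplay between quasi-sure convergence and the sublinear expectation: I would need to check that the q.s. (rather than everywhere) convergence of $X_m$ is still enough to invoke regularity for the difference sequences. This is fine, because a polar set is null under every $P_\theta$ and hence affects neither any $E_\theta$ nor their supremum, so $\mathbb{E}[Y_m]$ and $\mathbb{E}[Z_m]$ are unchanged after modification on such a set. One should also confirm the uniform boundedness needed to apply regularity, namely $|Y_m|,|Z_m|\le |X_m|+|X|$, and note that sub-additivity (rather than full additivity) already suffices, precisely because the sandwich in each part only requires a one-sided inequality in the relevant direction.
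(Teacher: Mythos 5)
Your proposal is correct and follows essentially the same route as the paper: both reduce the theorem to the regularity of $\mathbb{E}[\cdot]$ obtained from Remark \ref{re:rela_compact} and Lemma \ref{lemma:regular}, and both close the argument with monotonicity on one side and sub-additivity applied to the decomposition $X = X_m + (X - X_m)$ (resp.\ $X_m = X + (X_m - X)$) on the other. Your additional observations --- that polar sets are null under every $P_\theta$ so q.s.\ convergence suffices, and that only one-sided sub-additivity is needed --- are sound refinements of the same argument rather than a different approach.
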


\begin{proof}
(1) Since $X_m\uparrow X, q.s.$, the monotonicity of $\mathbb{E}$ implies $\mathbb{E}[X_m]\le \mathbb{E}[X]$ for all $m$, hence $\lim_{m\to\infty} \mathbb{E} [X_m]\le \mathbb{E}[X]$.
By Remark \ref{re:rela_compact} and Lemma \ref{lemma:regular}, the regular of $\mathbb{E}[\cdot]$ ensures that $X-X_m\downarrow 0, q.s.$ implies $\mathbb{E}[X-X_m]\downarrow 0$. Therefore,
$$
0=\lim_{m\to\infty}\mathbb{E}[X-X_m]\ge \mathbb{E}[X]-\lim_{m\to\infty}\mathbb{E}[X_m],
$$
which yields $\lim_{m\to\infty} \mathbb{E}[X_m]\ge \mathbb{E}[X]$. Combining inequalities, we conclude $\mathbb{E}[X_m]\uparrow \mathbb{E}[X]$.

(2) As the same way in the proof of (1), we can deduce from $X_m\downarrow X$ that $\mathbb{E}[X_m]\ge \mathbb{E} [X]$, giving $\lim_{m\to\infty} \mathbb{E}[X_m]\ge \mathbb{E}[X]$.
By regularity, $X_m - X \downarrow 0, q.s.$ leads to $\mathbb{E}\left[X_m-X\right]\downarrow 0$, hence
$$
0=\lim_{m\to\infty}\mathbb{E}[X_m-X]\ge \lim_{m\to\infty}\mathbb{E}[X_m]-\mathbb{E}[X],
$$
which implies $\lim_{m\to\infty} \mathbb{E}[X_m]\le \mathbb{E}[X]$. Thus, we can deduce that $\mathbb{E}[X_m]\downarrow \mathbb{E}[X]$, which completes the proof.
\end{proof}

\begin{remark}
The Monotone convergence theorem for bounded random variables is less affected by the state of sample space. Specifically, Theorem \ref{theo:monotone} extends naturally to uncountable state space $(\Omega, \mathcal{H})$.
\end{remark}

The bounded condition of random variables in Theorem \ref{theo:monotone} can be generalized to $\mathbb{L}_b^1$ condition, defined by
$$
\mathbb{L}_b^1 = \left \{X\in\mathbb{L}^{1}: \lim_{n\to\infty} \mathbb{E}[|X| I_{\{|X|>n\}}]=0\right \},
$$
where $\mathbb{L}^{1}$ denotes the Banach space endowed with the norm
$$
\left \|X\right \|_{1}:=\mathbb{E}[|X|]=\sup_{\theta\in\mathcal{D}} E_{\theta}[|X|]< \infty.
$$
We thereby relax the boundedness constraint of random variables while preserving convergence, which formalize this extension as following theorem.

\begin{theorem}[Monotone convergence theorem for $\mathbb{L}_b^1$ random variables]
\label{theo:monotone2}
Let $\{X_m\}_{m\geq 1}$ and $X$ be a $\mathbb{L}_b^1$-random sequence and variable, respectively, defined on the countable state space $(\Omega, \mathcal{H})$.

(1) Let $X_m\uparrow X, q.s.$ Then $\mathbb{E}[X_m]\uparrow \mathbb{E}[X]$.
	
(2) Let $X_m\downarrow X, q.s.$ Then $\mathbb{E}[X_m]\downarrow \mathbb{E}[X]$.
\end{theorem}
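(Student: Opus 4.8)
The plan is to reduce the $\mathbb{L}_b^1$ case to the already-established bounded case (Theorem \ref{theo:monotone}) by a truncation argument, using the defining property of $\mathbb{L}_b^1$ to make the truncation error uniformly small. For a level $N \in \mathbb{Z}^+$, introduce the truncation operator $\phi_N(x) = (x \vee (-N)) \wedge N$ and set $X_m^N = \phi_N(X_m)$ and $X^N = \phi_N(X)$. Since $\phi_N$ is nondecreasing and continuous, monotone pointwise convergence is preserved: if $X_m \uparrow X$ q.s. then $X_m^N \uparrow X^N$ q.s., and likewise for the decreasing case. Each truncated variable is bounded by $N$, so Theorem \ref{theo:monotone} applies to give $\mathbb{E}[X_m^N] \to \mathbb{E}[X^N]$ as $m \to \infty$ for every fixed $N$.

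The key device is a single dominating variable controlling all tails simultaneously. In case (1), monotonicity forces $X_1 \le X_m \le X$, hence $|X_m| \le Y := |X_1| \vee |X|$ for all $m$; the same bound holds in case (2), where $X \le X_m \le X_1$. I would first check $Y \in \mathbb{L}_b^1$: from the pointwise bound $Y I_{\{Y>N\}} \le |X_1| I_{\{|X_1|>N\}} + |X| I_{\{|X|>N\}}$ together with monotonicity and subadditivity of $\mathbb{E}$, the tail $\eta_N := \mathbb{E}[Y I_{\{Y>N\}}]$ tends to $0$ as $N \to \infty$. Because $|X_m| \le Y$ yields $|X_m| I_{\{|X_m|>N\}} \le Y I_{\{Y>N\}}$, the truncation error is controlled uniformly in $m$: using $|X_m - X_m^N| = (|X_m| - N)^+ \le |X_m| I_{\{|X_m|>N\}}$ and subadditivity, we get $\mathbb{E}[|X_m - X_m^N|] \le \eta_N$, and similarly $\mathbb{E}[|X - X^N|] \le \eta_N$.

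With these estimates the assembly is routine. For case (1), monotonicity already gives $\lim_m \mathbb{E}[X_m] \le \mathbb{E}[X]$; for the reverse, I would chain $\mathbb{E}[X] \le \mathbb{E}[X^N] + \eta_N$, $\mathbb{E}[X^N] = \lim_m \mathbb{E}[X_m^N]$, and $\mathbb{E}[X_m^N] \le \mathbb{E}[X_m] + \eta_N$ to obtain $\mathbb{E}[X] \le \lim_m \mathbb{E}[X_m] + 2\eta_N$, then let $N \to \infty$. Case (2) is handled symmetrically, using $\mathbb{E}[X_m] \le \mathbb{E}[X_m^N] + \eta_N$ and $\mathbb{E}[X^N] \le \mathbb{E}[X] + \eta_N$ to reach $\lim_m \mathbb{E}[X_m] \le \mathbb{E}[X] + 2\eta_N$. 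This must be carried out directly rather than by replacing $X_m$ with $-X_m$, since sublinearity means $\mathbb{E}[-\,\cdot\,] \ne -\mathbb{E}[\cdot]$ and the ``apply the increasing case to the negatives'' shortcut is unavailable.

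The main obstacle is precisely the uniform integrability step: the $\mathbb{L}_b^1$ hypothesis controls the tails of each $X_m$ and of $X$ individually, but the truncation reduction needs the bound on $\mathbb{E}[|X_m| I_{\{|X_m|>N\}}]$ to be uniform in $m$. Isolating the single dominating variable $Y = |X_1| \vee |X|$ and verifying $Y \in \mathbb{L}_b^1$ through subadditivity is what supplies this uniformity, and it is the crux on which the whole reduction rests. The separate, non-symmetric treatment of the increasing and decreasing cases forced by the nonlinearity of $\mathbb{E}$ is a secondary point that nonetheless requires writing out both chains of inequalities explicitly.
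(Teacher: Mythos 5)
Your proof is correct, but it follows a genuinely different route from the paper's. The paper proves part (2) by passing to the closure $\bar{\mathcal{P}}_{\Theta}$ of the relatively compact family, arguing that the associated sublinear expectation $\bar{\mathbb{E}}$ agrees with $\mathbb{E}$ on $\mathbb{L}_b^1$, and then invoking the downward monotone convergence theorem for weakly compact families (Theorem 31 in \cite{Peng2011}); part (1) is then deduced from part (2) by applying it to $X-X_m\downarrow 0$ after checking $X-X_m\in\mathbb{L}_b^1$. You instead truncate at level $N$, apply the paper's own bounded-case result (Theorem \ref{theo:monotone}) to $X_m^N\uparrow X^N$ (resp.\ $\downarrow$), and control the truncation error uniformly in $m$ via the dominating variable $Y=|X_1|\vee|X|$, whose tail $\eta_N=\mathbb{E}[YI_{\{Y>N\}}]\to 0$ follows from subadditivity and the $\mathbb{L}_b^1$ membership of $X_1$ and $X$; the two-sided chains $\mathbb{E}[X]\le\lim_m\mathbb{E}[X_m]+2\eta_N$ and $\lim_m\mathbb{E}[X_m]\le\mathbb{E}[X]+2\eta_N$ then close both cases. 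Your identification of the uniform-integrability step as the crux is exactly right: the pointwise sandwich $X_1\le X_m\le X$ (or its reverse) is what converts the individual tail conditions into a uniform one, and this is the ingredient that makes the truncation reduction work. What your approach buys is self-containedness and elementarity --- it rests only on Theorem \ref{theo:monotone} and the axioms of sublinear expectation, with no appeal to weak compactness, to the identity $\mathbb{E}=\bar{\mathbb{E}}$ on $\mathbb{L}_b^1$, or to the external Theorem 31 of \cite{Peng2011}; it also treats the increasing and decreasing cases symmetrically rather than deriving one from the other. What the paper's route buys is a direct connection to the established machinery for weakly compact families, at the cost of the closure/approximation argument. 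The only points worth making explicit in a final write-up are that $\mathcal{H}$ must contain $Y$ and the truncated/tail quantities you take expectations of (the paper is equally informal on this), and that the bounded-case theorem is applied to a sequence uniformly bounded by $N$, which your truncation guarantees.
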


\begin{proof}
We start with the proof of (2). Let $\bar{\mathcal{P}}_{\Theta}$ be the closure of relatively compact set $\mathcal{P}_{\Theta}$, and $\bar{\mathbb{E}}$ is the corresponding sublinear expectation.
Since $\bar{\mathcal{P}}_{\Theta}$ is compact, it is also weakly compact.
On the one hand, for any $X\in \mathbb{L}_b^1$, we have
\begin{equation}
\label{eq:theo2}
\bar{\mathbb{E}}[X]=\sup_{\bar{P}_{\theta}\in\bar{\mathcal{P}}_{\Theta}} E_{\bar{P}_{\theta}}[X]\ge \sup_{P_{\theta}\in\mathcal{P}_{\Theta}} E_{P_{\theta}}[X]=\mathbb{E}[X].
\end{equation}
On the other side, there exists a sequence $\{\bar{P}_{n}\}\subset \bar{\mathcal{P}}_{\Theta}$ converging weakly to a $\bar{P}\in \bar{\mathcal{P}}_{\Theta}$, with $\bar{\mathbb{E}}[X] = E_{\bar{P}}[X]$.
And there exists a sequence $\{P_{n}\}\subset\mathcal{P}_{\Theta}$ such that $d(\bar{P}_{n}, P_{n})\le \frac{1}{n}$ for all $n$. Applying the triangle inequality, we obtain
$$
d(\bar{P}, P_{n})\le d(\bar{P}, \bar{P}_{n})+ d(\bar{P}_{n}, P_{n})\le d(\bar{P}, \bar{P}_{n})+\frac{1}{n}.
$$
Since $\lim_{n\to\infty} d(\bar{P}, \bar{P}_{n})= 0$ by weak convergence, we have $\lim_{n\to\infty} d(\bar{P}, P_{n})= 0$. Then it follows that,
\begin{equation}
\label{eq:theo3}
\bar{\mathbb{E}} [X]=E_{\bar{P}}[X]=\lim_{n\to\infty} E_{P_{n}}[X]\le \mathbb{E}[X].
\end{equation}
Combing equations (\ref{eq:theo2}) and (\ref{eq:theo3}), we conclude $\mathbb{E}[X]=\bar{\mathbb{E}}[X]$ for all $X\in\mathbb{L}_{b}^{1}$ in the countable state space $(\Omega, \mathcal{H})$.
Then, we can infer that (2) follows from Theorem 31 in \cite{Peng2011}.

As for (1), $X_m\uparrow X$ implies $\mathbb{E}[X_m]\le \mathbb{E}[X]$, so $\lim_{m\to\infty} \mathbb{E}[X_m]\le \mathbb{E}[X]$.
On the contrary, note that $X-X_m\in \mathbb{L}_b^1$ because
$$
\mathbb{E}[|X-X_m|]\le \mathbb{E}[|X|]+\mathbb{E}[|X_m|]\le \infty,
$$
and
$$
0\le \lim_{n\to\infty}\mathbb{E}[|X-X_m| I_{\{|X-X_m|>n\}}] \le \lim_{n\to\infty}\mathbb{E}[|X| I_{\{|X|>\frac{n}{2}\}}] + \lim_{n\to\infty}\mathbb{E}[|X_m| I_{\{|X_m|>\frac{n}{2}\}}]\le \lim_{n\to\infty}\mathbb{E}[|X| I_{\{|X|>\frac{n}{2}\}}]=0,
$$
where the last equality follows from $X\in\mathbb{L}_b^1$.
Since $X-X_m\downarrow 0$, (2) gives  $\mathbb{E}[X-X_m]\downarrow 0$. Therefore,
$$
0=\lim_{m\to\infty}\mathbb{E}[X-X_m]\ge \mathbb{E}[X]-\lim_{m\to\infty}\mathbb{E}[X_m],
$$
implying $\lim_{m\to\infty} \mathbb{E}[X_m]\ge \mathbb{E}[X]$.
Hence, $\mathbb{E}[X_m]\uparrow \mathbb{E}[X]$, which completes the proof.
\end{proof}

\begin{remark}
The requirement $X \in \mathbb{L}_b^1$ in Theorem \ref{theo:monotone2} (2) can be omitted when $X_m \downarrow X, q.s.$, as it is inherited from the sequence $\{X_m\}_{m\ge 1} \subset \mathbb{L}_b^1$. Specifically, since $|X| \leq |X_m|$ for all $m$ and $\lim_{n \to \infty} \mathbb{E}[|X| I_{\{|X| > n\}}] \leq \lim_{n \to \infty} \mathbb{E}[|X_m| I_{\{|X_m| > n\}}] = 0$, the limit $X$ satisfies the $\mathbb{L}_b^1$-condition.
Thus, Theorem \ref{theo:monotone2} (2) generalizes Theorem 31 in \cite{Peng2011} by replacing the weak compactness of probability set with the weaker assumption of relative compactness.
\end{remark}

\begin{lemma}[Fatou's lemma]
\label{lemma:fatou}
Let $\{X_m\}_{m\geq 1}\subset \mathbb{L}_b^1$ be a random sequence defined on the countable state space $(\Omega, \mathcal{H})$.

(1) If there exists a random variable $Y\in \mathbb{L}_b^1$ such that $X_m \ge Y$ for all $m\ge 1$, and $\underline{\lim}_{m\to\infty}X_m \in \mathbb{L}_b^1$. Then
$$
\mathbb{E} [\underline{\lim}_{m\to\infty} X_m] \le \underline{\lim}_{m\to\infty} \mathbb{E}[X_m].
$$

(2) If there exists a random variable $Y\in \mathbb{L}_b^1$ such that $X_m \le Y$ for all $m\ge 1$, and $\overline{\lim}_{m\to\infty}X_m \in \mathbb{L}_b^1$. Then
$$
\mathbb{E}[\overline{\lim}_{m\to\infty} X_m] \ge \overline{\lim}_{m\to\infty} \mathbb{E}[X_m].
$$
\end{lemma}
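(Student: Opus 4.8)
The plan is to reduce Fatou's lemma to the Monotone convergence theorem for $\mathbb{L}_b^1$-random variables (Theorem \ref{theo:monotone2}) through the classical device of monotone envelope sequences. I would prove part (1) in full and obtain part (2) by the mirror-image construction. The only substantive work beyond the classical measure-theoretic argument is checking that the envelopes remain in $\mathbb{L}_b^1$, so that Theorem \ref{theo:monotone2} is applicable; the sublinearity of $\mathbb{E}$ is what makes this check nontrivial.

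For part (1), set $Y_m = \inf_{k\ge m} X_k$ for each $m\ge 1$. Because $\Omega$ is countable, the pointwise infimum of a countable family is again a random variable on $(\Omega,\mathcal{H})$, the sequence $\{Y_m\}_{m\ge 1}$ is nondecreasing, and $Y_m \uparrow \underline{\lim}_{m\to\infty} X_m$ q.s.\ via the identity $\underline{\lim}_{m} X_m = \sup_m \inf_{k\ge m} X_k$. The domination hypothesis gives $Y \le Y_m \le X_m$ for every $m$, whence $|Y_m| \le \max(|Y|,|X_m|)$. To verify $Y_m \in \mathbb{L}_b^1$, I would first note $\mathbb{E}[|Y_m|] \le \mathbb{E}[|Y|] + \mathbb{E}[|X_m|] < \infty$, and then exploit the clean pointwise bound
$$|Y_m| I_{\{|Y_m|>n\}} \le |Y| I_{\{|Y|>n\}} + |X_m| I_{\{|X_m|>n\}},$$
which follows from $|Y_m| \le \max(|Y|,|X_m|)$ and produces no cross terms. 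Applying the monotonicity and sub-additivity of $\mathbb{E}$ and letting $n\to\infty$, both right-hand tails vanish since $Y, X_m \in \mathbb{L}_b^1$, so $\lim_{n\to\infty}\mathbb{E}[|Y_m| I_{\{|Y_m|>n\}}]=0$ and hence $Y_m\in\mathbb{L}_b^1$. Together with the hypothesis $\underline{\lim}_m X_m \in \mathbb{L}_b^1$, all requirements of Theorem \ref{theo:monotone2}(1) are met.

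With the hypotheses secured, Theorem \ref{theo:monotone2}(1) yields $\mathbb{E}[Y_m]\uparrow \mathbb{E}[\underline{\lim}_m X_m]$. Since $Y_m = \inf_{k\ge m} X_k \le X_k$ for every $k\ge m$, monotonicity of $\mathbb{E}$ gives $\mathbb{E}[Y_m]\le \mathbb{E}[X_k]$ for all $k\ge m$, so $\mathbb{E}[Y_m]\le \inf_{k\ge m}\mathbb{E}[X_k]$; letting $m\to\infty$ gives
$$\mathbb{E}[\underline{\lim}_{m\to\infty} X_m] = \lim_{m\to\infty}\mathbb{E}[Y_m] \le \lim_{m\to\infty}\inf_{k\ge m}\mathbb{E}[X_k] = \underline{\lim}_{m\to\infty}\mathbb{E}[X_m],$$
establishing (1). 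For part (2) I would set $Z_m = \sup_{k\ge m} X_k$, observe $X_m \le Z_m \le Y$ so that $Z_m \in \mathbb{L}_b^1$ by the same domination estimate and $Z_m \downarrow \overline{\lim}_m X_m$, apply Theorem \ref{theo:monotone2}(2), and use $Z_m \ge X_k$ for $k\ge m$ to reverse every inequality. The step I expect to be the main obstacle is exactly the $\mathbb{L}_b^1$-membership of the envelopes $Y_m$ and $Z_m$: because $\mathbb{E}$ is only sublinear, tails cannot be compared directly, and the argument hinges on choosing the domination $Y \le Y_m \le X_m$ so that the uniform-integrability condition transfers cleanly from the dominating variables without leaving uncontrolled cross terms.
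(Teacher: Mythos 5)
Your proposal is correct and follows essentially the same route as the paper: define the monotone envelopes $\inf_{k\ge m}X_k$ and $\sup_{k\ge m}X_k$, verify their $\mathbb{L}_b^1$-membership via the domination $Y\le \inf_{k\ge m}X_k\le X_m$, and invoke Theorem \ref{theo:monotone2}. Your pointwise tail bound $|Y_m|I_{\{|Y_m|>n\}}\le |Y|I_{\{|Y|>n\}}+|X_m|I_{\{|X_m|>n\}}$ is a slightly cleaner version of the paper's estimate (which uses thresholds $n/2$), but this is a cosmetic difference, not a different argument.
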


\begin{proof}
Regarding (1), let $g_m=\inf_{k \ge m} X_k$, we have $g_m \uparrow \underline{\lim}_{m\to\infty}X_m$ owing $X_m \ge Y, \forall m\ge 1$.
Since $\{X_m\}_{m\ge 1}, Y\in \mathbb{L}_b^1$ and $Y\le g_m \le X_m$, then
\begin{equation}
\label{eq:lemma1}
\mathbb{E}[|g_m|]\le \mathbb{E}[|Y|+|X_m|]<\infty,
\end{equation}
and
\begin{equation}
\label{eq:lemma2}
0\le \lim_{n\to\infty} \mathbb{E}[|g_m| I_{\{|g_m|>n \}}]
\le \lim_{n\to\infty} \mathbb{E}[|Y|I_{\{|Y|>\frac{n}{2}\}}] + \lim_{n\to\infty} \mathbb{E}[|X_m|I_{\{|X_m|>\frac{n}{2}\}}]
\le \lim_{n\to\infty} \mathbb{E}[|X_m|I_{\{|X_m|>n\}}] =0,
\end{equation}
indicating $g_m\in\mathbb{L}_b^1$.
Then applying (1) in Theorem \ref{theo:monotone2}, we have
$$
\mathbb{E}[\underline{\lim}_{m\to\infty}X_m] = \lim_{m \to \infty}\mathbb{E}[g_m] \le \underline{\lim}_{m\to\infty} \mathbb{E}[X_m].
$$

As for the proof of (2), let $h_m=\sup_{k \ge m} X_k$, then $h_m \downarrow \overline{\lim}_{m\to\infty}X_m$. The $h_m\in \mathbb{L}_b^1$ can be derived by an argument analogous to the method applied in equations (\ref{eq:lemma1}) and (\ref{eq:lemma2}).
By (2) in Theorem \ref{theo:monotone2}, we have
$$
\mathbb{E} [\overline{\lim}_{m\to\infty} X_m] = \lim_{m \to \infty}\mathbb{E}[h_m] \ge \overline{\lim}_{m\to\infty} \mathbb{E}[X_m].
$$
This completes the proof.
\end{proof}

\begin{theorem}[Dominated convergence theorem]
\label{theo:control}
Let $\{X_m\}_{m\geq 1}$ and $X$ be a random sequence and variable, respectively, defined on the countable state space $(\Omega, \mathcal{H})$. Since $X_m\to X, q.s.$, and there is a non-negative random variable $Y\in\mathbb{L}_b^1$ such that $|X_m|\le Y$ for all $m\ge 1$. Then
$$
\lim_{m \to \infty} \mathbb{E}[X_m]=\mathbb{E}[X].
$$
\end{theorem}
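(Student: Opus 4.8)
The plan is to derive the Dominated convergence theorem as a routine corollary of Fatou's lemma (Lemma \ref{lemma:fatou}), by sandwiching the lower and upper limits of $\mathbb{E}[X_m]$ between two copies of $\mathbb{E}[X]$. The whole argument rests on the two-sided domination $-Y \le X_m \le Y$ together with the identification $\underline{\lim}_m X_m = \overline{\lim}_m X_m = X$ coming from q.s. convergence.

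First I would verify every integrability hypothesis needed to invoke Lemma \ref{lemma:fatou}. Since $|X_m| \le Y$ for all $m$ and $X_m \to X, q.s.$, passing to the limit gives $|X| \le Y, q.s.$, so that $-Y \le X_m \le Y$ and $-Y \le X \le Y$ hold q.s. The domination also transfers the $\mathbb{L}_b^1$-property: because $\{|X_m| > n\} \subseteq \{Y > n\}$ and $0 \le |X_m| \le Y$, we have $|X_m| I_{\{|X_m|>n\}} \le Y I_{\{Y>n\}}$, whence $\mathbb{E}[|X_m| I_{\{|X_m|>n\}}] \le \mathbb{E}[Y I_{\{Y>n\}}] \to 0$ as $n\to\infty$ by $Y\in\mathbb{L}_b^1$; the identical estimate applied to $X$ shows $X\in\mathbb{L}_b^1$. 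Finally, q.s. convergence yields $\underline{\lim}_{m\to\infty} X_m = \overline{\lim}_{m\to\infty} X_m = X \in \mathbb{L}_b^1$, so both branches of Lemma \ref{lemma:fatou} apply, part (1) with the lower bound $-Y\in\mathbb{L}_b^1$ and part (2) with the upper bound $Y\in\mathbb{L}_b^1$.

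Next I would apply the two halves of Fatou's lemma. Part (1), using $X_m \ge -Y$, gives $\mathbb{E}[X] = \mathbb{E}[\underline{\lim}_{m\to\infty} X_m] \le \underline{\lim}_{m\to\infty} \mathbb{E}[X_m]$. Part (2), using $X_m \le Y$, gives $\mathbb{E}[X] = \mathbb{E}[\overline{\lim}_{m\to\infty} X_m] \ge \overline{\lim}_{m\to\infty} \mathbb{E}[X_m]$. Chaining these with the trivial inequality $\underline{\lim}_{m\to\infty} \mathbb{E}[X_m] \le \overline{\lim}_{m\to\infty} \mathbb{E}[X_m]$ produces
$$
\overline{\lim}_{m\to\infty} \mathbb{E}[X_m] \le \mathbb{E}[X] \le \underline{\lim}_{m\to\infty} \mathbb{E}[X_m] \le \overline{\lim}_{m\to\infty} \mathbb{E}[X_m],
$$
forcing equality throughout. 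Hence the limit $\lim_{m\to\infty}\mathbb{E}[X_m]$ exists and equals $\mathbb{E}[X]$.

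The computation is a pure sandwich once Lemma \ref{lemma:fatou} is available, so the only genuine care lies in the first paragraph: confirming that the single dominating variable $Y\in\mathbb{L}_b^1$ drags every $X_m$ and the limit $X$ into $\mathbb{L}_b^1$, and that q.s. convergence legitimately identifies both $\underline{\lim}_m X_m$ and $\overline{\lim}_m X_m$ with $X$. I expect the main (and minor) obstacle to be handling the \emph{q.s.} qualifier cleanly, since $X_m\to X$ and $|X_m|\le Y$ are only asserted outside a polar set; this causes no difficulty because $\mathbb{E}[\cdot]$ assigns the same value to random variables that agree q.s., so the exceptional set is invisible to all the sublinear-expectation inequalities above.
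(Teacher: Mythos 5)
Your proposal is correct and follows essentially the same route as the paper: verify that domination by $Y\in\mathbb{L}_b^1$ places every $X_m$ and the limit $X$ in $\mathbb{L}_b^1$, identify $\underline{\lim}_m X_m=\overline{\lim}_m X_m=X$, and sandwich $\mathbb{E}[X_m]$ between the two halves of Lemma \ref{lemma:fatou}. Your write-up is in fact slightly more careful than the paper's, since you spell out the estimate $\mathbb{E}[|X_m|I_{\{|X_m|>n\}}]\le\mathbb{E}[Y I_{\{Y>n\}}]$ and note that $-Y$ serves as the $\mathbb{L}_b^1$ lower bound required by part (1) of the lemma.
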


\begin{proof}
Given the uniform bound $|X_m| \leq Y$ for all $m \ge 1$, the limit $X$ satisfies $|X| \leq Y$. Since $Y \in \mathbb{L}_b^1$, then $\{X_m\}_{m\geq 1}\subset \mathbb{L}_b^1$ and $X \in \mathbb{L}_b^1$.
It is apparent that
$$
X=\lim_{m\to\infty}X_m=\underline{\lim}_{m\to\infty}X_m= \overline{\lim}_{m\to\infty}X_m,
$$
Applying Lemma \ref{lemma:fatou}, we can obtain that
$$
\mathbb{E}[X]=\mathbb{E}[\underline{\lim}_{m\to\infty} X_m]\le \underline{\lim}_{m\to\infty} \mathbb{E}[X_m] \le \overline{\lim}_{m\to\infty} \mathbb{E}[X_m] \le \mathbb{E} [\overline{\lim}_{m\to\infty} X_m] =\mathbb{E}[X].
$$
This implies $\mathbb{E}[X]=\lim_{m\to\infty} \mathbb{E}[X_m]$, which completes the proof.
\end{proof}

\subsection{Law of large numbers}
In countable state space, the law of large numbers under sublinear expectation can be established via the Dominated convergence theorem. Before presenting this results, we provide key definitions of identically distributed and independence.
The definition of identically distributed under countable state space aligns with Definition 1.3.1 in Peng's classical framework \cite{Peng2019}. Two random variables $X_1$ and $X_2$ defined on the countable state space $(\Omega_1,\mathcal{H}_1)$ and $(\Omega_2,\mathcal{H}_2)$, respectively, are called identically distributed, denoted by $X_1 \overset{d}{=} X_2$, if
\begin{equation}
\label{eq:subdistri}
\mathbb{E}_1 [\varphi (X_1)]=\mathbb{E}_2 [\varphi (X_2)],\quad \forall \varphi \in C_{b.Lip}(\mathbb{R}).
\end{equation}
where $C_{b.Lip}(\mathbb{R})$ denotes the space of bounded lipschitz functions.
Regarding the definition of independence, in Definition 1.3.11 of Peng's framework \cite{Peng2019}, a random variable $Y$ is said to be independent of $X$ if
\begin{equation}
\label{eq:cla_indepen}
\mathbb{E}[\varphi (X,Y)]=\mathbb{E}[\mathbb{E}[\varphi(x,Y)]_{x=X}],\quad \forall \varphi \in C_{b.Lip}(\mathbb{R}^2).
\end{equation}
This definition is asymmetric, that is, independence of $Y$ from $X$ does not imply the converse. While under countable state space, the independence structure of sublinear expectation admits mutual symmetry.
To formalize this, we introduce a new definition of independence for each $P_{\theta}$ on countable state space.

\begin{definition}[Independence]
\label{de:subindepen}
Let $X$ and $Y$ be two random variables defined on the countable state space $(\Omega,\mathcal{H})$.
We call $Y$ is independent of $X$, if
\begin{equation}
\label{eq:subindepen}
\mathbb{E} [\varphi(X,Y)] = \sup_{\theta\in\mathcal{D}} E_{\theta} \left[E_{\theta} \left [\varphi (x, Y) \right]_{x=X}\right],\quad \forall \varphi \in C_{b.lip}(\mathbb{R}^2).
\end{equation}
\end{definition}

\begin{remark}
\label{re:P_theta}
The independence under each probability $P_{\theta}$ implies equation (\ref{eq:subindepen}) in Definition \ref{de:subindepen}.
Indeed, observe that:
\begin{align}
\label{eq:remark3}
\mathbb{E} [\varphi(X,Y)]&=\sup_{\theta\in\mathcal{D}} E_{\theta}[\varphi(X,Y)]=\sup_{\theta\in\mathcal{D}} \sum_{i, j\in \mathbb{Z}^{+}} \varphi(X(\omega_{i}), Y(\omega_{j})) P_{\theta}(X=X(\omega_{i}),Y=Y(\omega_{j})),
\end{align}
and
\begin{align}
\label{eq:remark4}
\sup_{\theta\in\mathcal{D}} E_{\theta} \left[E_{\theta} \left [\varphi (x, Y) \right]_{x=X}\right]&= \sup_{\theta\in\mathcal{D}} E_{\theta} \left[\sum_{j\in \mathbb{Z}^{+}} \varphi (x, Y(\omega_{j})) P_{\theta}(Y=Y(\omega_{j})) \ \Big\vert_{x=X} \right]\notag\\
&=\sup_{\theta\in\mathcal{D}}  \left[\sum_{i\in \mathbb{Z}^{+}} \sum_{j\in \mathbb{Z}^{+}} \varphi (X(\omega_{i}), Y(\omega_{j})) P_{\theta}(X=X(\omega_{i})) P_{\theta}(Y=Y(\omega_{j})) \right].
\end{align}
By the independence under each $P_{\theta}$, we have
\begin{equation}
\label{eq:remark5}
P_{\theta}(X=X(\omega_{i}),Y=Y(\omega_{j}))=P_{\theta}(X=X(\omega_{i})) P_{\theta}(Y=Y(\omega_{j})), \quad \forall \theta\in\mathcal{D},\ \forall i,j \in\mathbb{Z}^{+}.
\end{equation}
Substituting (\ref{eq:remark5}) into (\ref{eq:remark3}), the expression in (\ref{eq:remark3}) and (\ref{eq:remark4}) coincide.
\end{remark}

\begin{remark}
\label{re:independent}
The independence in Definition \ref{de:subindepen} is symmetric, i.e., if $Y$ is independent of $X$, then $X$ is independent of $Y$, and vice versa.
And the relationship between Definition \ref{de:subindepen} and Definition 1.3.11 of \cite{Peng2019} can be formalized through the inequality:
\begin{equation}
\label{eq:rela_indepen}
\sup_{\theta\in\mathcal{D}} E_{\theta} \left[E_{\theta} \left [\varphi (x, Y) \right]_{x=X}\right]
\le \sup_{\theta\in\mathcal{D}} E_{\theta} \left[\sup_{\theta\in\mathcal{D}} E_{\theta} \left [\varphi (x, Y) \right]_{x=X}\right].
\end{equation}
We further provide concrete examples to illustrate the validity of inequality (\ref{eq:rela_indepen}). These examples demonstrate that, for specific choices of $\varphi$, the calculation results under equation (\ref{eq:subindepen}) in Definition \ref{de:subindepen} coincide with equation (\ref{eq:cla_indepen}) in Peng's framework \cite{Peng2019}.
\end{remark}

\begin{example}
\label{exam:indepen}
Given $\Omega =\{\omega_1,\omega_2\}$ and a convex domain ${\mathcal{D}}= \{\theta_1: \frac{1}{3} \le \theta_1 \le \frac{2}{3} \}$.
Let $X$ and $Y$ be two-point random variables defined on the countable state space $(\Omega, \mathcal{H})$,
$$
X(\omega)=\left\{\begin{matrix}
1, & \omega=\omega_1\\
0, & \omega=\omega_2
\end{matrix}\right.,
\quad
Y(\omega)=\left\{\begin{matrix}
0, & \omega=\omega_1\\
1, & \omega=\omega_2
\end{matrix}\right..
$$

First, we consider the case where $\varphi(x,y)= (x-\frac{1}{2})y^2$. Let $Y$ be independent of $X$ under the condition specified in equation (\ref{eq:subindepen}), then
\begin{equation}
\label{eq:<1}
\sup_{\theta\in\mathcal{D}} E_{\theta} \left[E_{\theta} \left [\varphi (x, Y) \right]_{x=X}\right]= \sup_{\theta\in\mathcal{D}} E_{\theta} \left[(X-\frac{1}{2})(1-\theta_1) \right]= \sup_{\theta_1\in \left[\frac{1}{3},\frac{2}{3}\right]} (\theta_1-\frac{1}{2}) (1-\theta_1)=\frac{1}{18}.
\end{equation}
Let $Y$ be independent of $X$ under the condition specified in equation (\ref{eq:cla_indepen}), then
\begin{equation}
\label{eq:<2}
\sup_{\theta\in\mathcal{D}} E_{\theta} \left[\sup_{\theta\in\mathcal{D}} E_{\theta} \left [\varphi (x, Y) \right]_{x=X}\right]=\sup_{\theta_1 \in \left[\frac{1}{3},\frac{2}{3}\right]} \left[ (\frac{1}{2} \sup_{\theta_1 \in \left[\frac{1}{3}, \frac{2}{3}\right]} (1-\theta_1))\theta_1 + (\frac{1}{2} \sup_{\theta_1 \in \left[\frac{1}{3},\frac{2}{3}\right]}(\theta_1-1)) (1-\theta_1)\right]=\frac{1}{6}.
\end{equation}
Comparing equations (\ref{eq:<1}) and (\ref{eq:<2}), for $\varphi(x,y)= (x-\frac{1}{2})y^2$, we have
$$
\sup_{\theta\in\mathcal{D}} E_{\theta} \left[E_{\theta} \left [\varphi (x, Y) \right]_{x=X}\right] < \sup_{\theta\in\mathcal{D}} E_{\theta} \left[\sup_{\theta\in\mathcal{D}} E_{\theta} \left [\varphi (x, Y) \right]_{x=X}\right].
$$

Second, we consider the case where $\varphi(x,y)=x(1-y)$. Let $Y$ be independent of $X$ under the condition specified in equation (\ref{eq:subindepen}), then
\begin{equation}
\label{eq:=1}
\sup_{\theta\in\mathcal{D}} E_{\theta} \left[E_{\theta} \left [\varphi (x, Y) \right]_{x=X}\right]= \sup_{\theta_1\in \left[\frac{1}{3}, \frac{2}{3}\right]} E_{\theta} \left[X\theta_1\right]= \sup_{\theta_1\in \left[\frac{1}{3}, \frac{2}{3}\right]} \theta^2_1 =\frac{4}{9}.
\end{equation}
Let $Y$ be independent of $X$ under the condition specified in equation (\ref{eq:cla_indepen}), then
\begin{equation}
\label{eq:=2}
\sup_{\theta\in\mathcal{D}} E_{\theta} \left[\sup_{\theta\in\mathcal{D}} E_{\theta} \left [\varphi (x, Y) \right]_{x=X}\right]
= \sup_{\theta\in \mathcal{D}} E_{\theta} \left[X (\sup_{\theta_1\in \left[\frac{1}{3}, \frac{2}{3}\right]} \theta_1)\right] = \frac{2}{3} \sup_{\theta_1\in \left[\frac{1}{3}, \frac{2}{3}\right]}E_{\theta}[X] =\frac{4}{9}.
\end{equation}
Comparing equations (\ref{eq:=1}) and (\ref{eq:=2}), for $\varphi(x,y)=x(1-y)$, we have
$$
\sup_{\theta\in\mathcal{D}} E_{\theta} \left[E_{\theta} \left [\varphi (x, Y) \right]_{x=X}\right] = \sup_{\theta\in\mathcal{D}} E_{\theta} \left[\sup_{\theta\in\mathcal{D}} E_{\theta} \left [\varphi (x, Y) \right]_{x=X}\right].
$$
\end{example}

Based on the independence under each probability $P_{\theta}$, we can prove the following law of large numbers by Dominated convergence theorem.

\begin{theorem}[Law of large numbers]
\label{theo:largenum}
Let $\{X_m\}_{m\ge 1}$ be a sequence of random variables defined on the countable state space $(\Omega,\mathcal{H})$. We further assume $\{X_m\}_{m=1}^{\infty}$ is an independent sequence under each probability $P_{\theta}$ with the same upper and lower expectations, i.e. $-\mathbb{E}[-X_m]=\underline{\mu},\ \mathbb{E}[X_m]=\overline{\mu}$.
Then
\begin{equation}
\label{eq:largenum}
\lim_{n\to\infty} \mathbb{E} \left [ \varphi \left ( \frac{X_1+\cdots+X_n}{n} \right ) \right ] = \sup_{\mu \in [\underline{\mu}, \overline{\mu}]} \varphi(\mu),\quad \varphi\in C_{b.lip}(\mathbb{R}).
\end{equation}
\end{theorem}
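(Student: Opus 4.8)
The plan is to exploit the symmetric independence of Definition \ref{de:subindepen}, which by Remark \ref{re:P_theta} factorizes each linear expectation, so that under every fixed $P_\theta$ the variables $X_1,\dots,X_n$ are classically independent and
\[
\mathbb{E}\!\left[\varphi\!\left(\tfrac{S_n}{n}\right)\right]=\sup_{\theta\in\mathcal{D}}E_\theta\!\left[\varphi\!\left(\tfrac{S_n}{n}\right)\right],\qquad S_n:=X_1+\cdots+X_n.
\]
This reduces the nonlinear statement to a family of classical computations indexed by $\theta$, and the proof then amounts to squeezing $\sup_\theta E_\theta[\varphi(S_n/n)]$ between two bounds that both converge to $\sup_{\mu\in[\underline{\mu},\overline{\mu}]}\varphi(\mu)$. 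I would first reduce to uniformly bounded $X_m$: truncating $X_m$ at a level $N$ and letting $N\to\infty$ is precisely where the Dominated convergence theorem (Theorem \ref{theo:control}) enters, since $|\varphi(S_n/n)|\le\|\varphi\|_\infty$ supplies the required $\mathbb{L}_b^1$-domination and the truncation error is controlled uniformly in $n$ through the $\mathbb{L}_b^1$-condition.

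For the upper bound I would use concentration that is uniform in $\theta$. For fixed $\theta$, set $m_n^\theta:=\frac1n\sum_{m=1}^n E_\theta[X_m]$; the matching-expectation hypothesis $-\mathbb{E}[-X_m]=\underline{\mu}$ and $\mathbb{E}[X_m]=\overline{\mu}$ forces $E_\theta[X_m]\in[\underline{\mu},\overline{\mu}]$, hence $m_n^\theta\in[\underline{\mu},\overline{\mu}]$. By independence under $P_\theta$ the variance factorizes, $\mathrm{Var}_\theta(S_n/n)=n^{-2}\sum_{m=1}^n\mathrm{Var}_\theta(X_m)\le \sigma^2/n$ with $\sigma^2$ a uniform (in $\theta,m$) second-moment bound, so Chebyshev gives $P_\theta(|S_n/n-m_n^\theta|>\delta)\le \sigma^2/(n\delta^2)$. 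Splitting the expectation on this event and using the Lipschitz constant $L$ of $\varphi$ together with $\varphi(m_n^\theta)\le\sup_{\mu}\varphi(\mu)$, I obtain $E_\theta[\varphi(S_n/n)]\le \sup_\mu\varphi(\mu)+L\delta+\|\varphi\|_\infty\sigma^2/(n\delta^2)$; the choice $\delta=n^{-1/4}$ makes the right-hand side independent of $\theta$ and convergent, whence $\limsup_n\mathbb{E}[\varphi(S_n/n)]\le\sup_{\mu\in[\underline{\mu},\overline{\mu}]}\varphi(\mu)$.

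For the lower bound I would select a near-optimal measure. Let $v^{\ast}\in[\underline{\mu},\overline{\mu}]$ attain $\sup_\mu\varphi(\mu)$. Using attainability of the extremal expectations together with the convexity and compactness of $\mathcal{D}$, I choose $\theta^{\ast}\in\mathcal{D}$ (or a suitable sequence) with $m_n^{\theta^{\ast}}\to v^{\ast}$; under $P_{\theta^{\ast}}$ the classical law of large numbers then gives $S_n/n\to v^{\ast}$ in probability, and the same Chebyshev splitting—now bounding below and invoking the Dominated convergence theorem to pass $\varphi$ through the limit—yields $E_{\theta^{\ast}}[\varphi(S_n/n)]\to\varphi(v^{\ast})$, so $\liminf_n\mathbb{E}[\varphi(S_n/n)]\ge\varphi(v^{\ast})=\sup_{\mu}\varphi(\mu)$; combining the two bounds proves \eqref{eq:largenum}. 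The main obstacle is exactly this lower-bound step: because the new independence forces a single $\theta$ to govern all coordinates simultaneously, one must exhibit one measure driving the empirical mean $m_n^{\theta}$ to the maximizer $v^{\ast}$ for all large $n$. This is automatic when the $X_m$ are identically distributed (a single $\theta$ then sets every $E_\theta[X_m]=v^{\ast}$), but in the general matching-expectation case it hinges on the structural convexity and compactness of $\mathcal{D}$ and on a careful uniform-in-$\theta$ passage to the limit, which is where the relative compactness of $\mathcal{P}_\Theta$ and the Dominated convergence theorem carry the essential weight.
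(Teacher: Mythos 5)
Your proposal reaches the same starting reduction as the paper---writing $\mathbb{E}[\varphi(S_n/n)]=\sup_{\theta\in\mathcal{D}}E_\theta[\varphi(S_n/n)]$ and using the fact that Definition \ref{de:subindepen} makes $X_1,\dots,X_n$ classically independent under each fixed $P_\theta$---but from there the two arguments genuinely diverge. The paper argues softly: it invokes Theorem \ref{theo:control} to interchange $\lim_n$ with $\sup_\theta$, applies the classical law of large numbers and classical dominated convergence under each $P_\theta$ to obtain $\lim_n E_\theta[\varphi(S_n/n)]=\varphi(\mu_\theta)$, and then uses convexity and compactness of $\mathcal{D}$ to identify $\{\mu_\theta:\theta\in\mathcal{D}\}$ with $[\underline{\mu},\overline{\mu}]$. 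You instead run a quantitative two-sided squeeze: a Chebyshev bound with variance at most $\sigma^2/n$ uniform in $\theta$ yields the $\limsup$ inequality without ever interchanging limit and supremum, and a near-optimal $\theta^{\ast}$ yields the $\liminf$ inequality. Your route is arguably more robust at the upper-bound step, because the paper's interchange is delicate: $\varphi(S_n/n)$ does not converge quasi-surely to a single random variable (the limit $\varphi(\mu_\theta)$ depends on $\theta$), so Theorem \ref{theo:control} does not literally apply as stated, whereas your uniform concentration estimate closes that direction cleanly. The price is the extra hypothesis of a second-moment bound $\sigma^2$ uniform in $\theta$ and $m$, which the theorem does not grant and which your truncation sketch does not fully recover (truncation perturbs the means, and controlling that perturbation uniformly needs an $\mathbb{L}_b^1$-type uniform integrability assumption on the $X_m$ themselves, not just on $\varphi(S_n/n)$). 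Finally, the obstacle you flag at the lower-bound step---exhibiting a single $\theta^{\ast}$ with $\frac{1}{n}\sum_{m=1}^n E_{\theta^{\ast}}[X_m]\to v^{\ast}$ when the $X_m$ are not identically distributed---is present in the paper's proof as well: the paper writes $\mu_\theta=E_\theta[X_1]$, tacitly assuming every $X_m$ has the same mean under each $P_\theta$. Your diagnosis of where the essential difficulty lies is therefore accurate, and both arguments need an identical-distribution-type assumption (or a substitute for it) to close completely.
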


\begin{proof}
Since $\varphi\in C_{b.lip}(\mathbb{R})$, there exist constant $M>0$ such that  $|\varphi|\le M < \infty$.
Owing compact condition is stronger than relatively compact, then Theorem $\ref{theo:control}$ can also be used in the compact region $\mathcal{D}$.
By applying Theorem \ref{theo:control} and classical Dominated convergence theorem, we have
\begin{equation}
\label{eq:law1}
\lim_{n\to\infty} \mathbb{E} \left [ \varphi \left ( \frac{X_1+\cdots+X_n}{n} \right ) \right ]
=\mathbb{E} \left [\lim_{n\to\infty} \varphi \left(\frac{X_1+\cdots+X_n}{n} \right ) \right ]
=\sup_{\theta\in\mathcal{D}} \lim_{n\to\infty} E_{\theta} \left [ \varphi \left(\frac{X_1+\cdots+X_n}{n} \right ) \right ].
\end{equation}
For each $P_{\theta}$, by the classical law of large numbers, we have that $\{\frac{1}{n}(X_1+\cdots+X_n)\} \overset{P_{\theta}}{\to} \mu_{\theta}$ for all $\theta\in\mathcal{D}$. Founded in the classical Dominated convergence theorem, it is apparent that
$$
\lim_{n\to\infty} E_{\theta} \left [ \varphi \left(\frac{X_1+\cdots+X_n}{n} \right ) \right ]= \varphi (\mu_{\theta}).
$$
Thus
\begin{equation}
\label{eq:law2}
\sup_{\theta\in\mathcal{D}}\lim_{n\to\infty} E_{\theta} \left [ \varphi \left(\frac{X_1+\cdots+X_n}{n} \right ) \right ]= \sup_{\theta\in \mathcal{D}} \varphi (\mu_{\theta}),
\end{equation}
where $\mu_{\theta}= E_{\theta} [X_1]$.
Note that $\mathcal{D}$ is convex and compact, infimum and supremum of $E_{\theta}[X_i]$ can be attained at some $\theta\in\mathcal{D}$, i.e. $\mu_{\underline{\theta}}= \inf_{\theta\in\mathcal{D}} E_{\theta}[X_1]= \underline{\mu}$ and  $\mu_{\overline{\theta}}= \sup_{\theta\in\mathcal{D}} E_{\theta}[X_1]= \overline{\mu}$.
And for each $\mu\in [\underline{\mu}, \overline{\mu}]$, there exists $\theta\in \mathcal{D}$ such that $\mu_{\theta}=\mu$. Then, we have
\begin{equation}
\label{eq:law3}
\sup_{\theta\in \mathcal{D}} \varphi (\mu_{\theta})
=\sup_{\mu\in [\underline{\mu}, \overline{\mu}]} \varphi(\mu).
\end{equation}
Combing equations (\ref{eq:law1}) (\ref{eq:law2}) and (\ref{eq:law3}), it follows that
$$
\lim_{n\to\infty} \mathbb{E} \left [ \varphi \left ( \frac{X_1+\cdots+X_n}{n} \right ) \right ]
=\sup_{\theta\in\mathcal{D}} \lim_{n\to\infty} E_{\theta} \left [ \varphi \left(\frac{X_1+\cdots+X_n}{n} \right ) \right ]
= \sup_{\mu\in [\underline{\mu}, \overline{\mu}]} \varphi(\mu).
$$
This completes the proof.
\end{proof}

\begin{remark}
Note that in Peng \cite{Peng2019}, a maximal distribution is defined as
$$
\mathbb{E}[\varphi(\xi)]= \sup_{\mu\in[\underline{\mu}, \overline{\mu}]} \varphi (\mu),\quad \forall \varphi\in C_{b.lip}(\mathbb{R}).
$$
Theorem \ref{theo:largenum} shows that a sequence $\left \{\frac{1}{n}(X_1+\cdots+X_n)\right \}$ converges to a maximal distribution in law under countable state space, which is consistent with the nonlinear law of large numbers in Peng \cite{Peng2019}.
\end{remark}

\begin{remark}
According to Peng and Jin \cite{Peng2021}, the maximum estimator is the largest unbiased estimator for the upper mean and the minimum  estimator is the smallest unbiased estimator for the lower mean. Based on Theorem \ref{theo:largenum}, we can use the moment estimation to estimate upper and lower expectations. The sample moment $\beta=\frac{1}{n}(X_1+\cdots+X_n)$  converges to the maximal distribution with parameters $\hat{\underline{\mu}}$ and $\hat{\overline{\mu}}$, thus
$$
\hat{\underline{\mu}}=\inf_{\theta\in \mathcal{D}} E_{\theta} \left[\frac{X_1+\cdots+X_n}{n} \right],\quad
\hat{\overline{\mu}}=\sup_{\theta\in \mathcal{D}} E_{\theta} \left[\frac{X_1+\cdots+X_n}{n} \right],
$$
which indicates that the mean uncertainty can be estimated by the infimum and supremum of the expectation of the sequence.
\end{remark}

\section{Conclusion}
\label{sec:conclude}
In this paper, we develop a framework for sublinear expectation under countable state space. Building upon nonlinear randomized experiments, we introduce a countable state space and characterize a family of probability measures though a convex compact domain $\mathcal{D}$. Leveraging iterative summation techniques, we derive an explicit calculation for sublinear expectation and illustrate its application through representative examples.
By relative compactness probability sets $\mathcal{P}_{\Theta}$, we present Monotone Convergence Theorem, Fatou's Lemma, and Dominated Convergence Theorem under countable state space.
Furthermore, we establish a law of large numbers of sublinear expectation by combining the independence under each $P_{\theta}$ with the Dominated Convergence Theorem. This result demonstrates that the random sequence converges to a maximal distribution.
Potential applications of this framework to uncertainty modeling, financial risk quantification and derivatives pricing are discussed as directions for future researches.

\bibliography{gexpf}

\end{document}